\definecolor{darkteal}{rgb}{0,0.35,0.35}
\definecolor{burntorange}{rgb}{0.8,0.33,0.0}
\newcommand{\js}[1]{\textcolor{darkteal}{\sffamily\small\upshape [JS: #1]}}
\newcommand{\chg}[1]{#1}%{\textcolor{\chgcolor}{#1}}
\newcommand{\sphere}{\mathbb{S}} % space on which the angular measure is defined
\newcommand{\diff}{\mathrm{d}} % differential
\newcommand{\T}{^{\mathrm{\scriptscriptstyle T}}} % matrix transpose
\newcommand{\pr}{\operatorname{\mathsf{P}}} % probability
\newcommand{\expec}{\operatorname{\mathsf{E}}} % expectation
\newcommand{\rbr}[1]{\left(#1\right)}
\newcommand{\nbr}[1]{\left\|#1\right\|}
\newcommand{\gp}{\operatorname{GP}}
\newcommand{\mgp}{\operatorname{MGP}} % multivariate generalized Pareto distribution
\newcommand{\gev}{\operatorname{GEV}} % generalized extreme value distribution
\newcommand{\LL}{\mathbb{L}} % L-shaped set
\newcommand{\point}{\,\cdot\,} % dummy argument of a function
\newcommand{\indic}{\operatorname{\mathbb{I}}} % indicator function
\newcommand{\evi}{\xi} % extreme value index
\newcommand{\bevi}{\boldsymbol{\evi}} % vector of extreme value indices
\newcommand{\cbr}[1]{\left\{#1\right\}}
\newcommand{\E}{\mathbb{E}}
\newcommand{\R}{\mathbb{R}}
\def\GP{\text{GP}}
\def\binfty{\boldsymbol \infty}
\def\e{\mathrm{e}}
\newcommand{\Norm}{\operatorname{N}}
\def\bzero{\boldsymbol 0}
\def\bone{\boldsymbol 1}
\def\bA{\boldsymbol A}
\def\bE{\boldsymbol E}
\def\bS{\boldsymbol{S}}
\def\bU{\boldsymbol U}
\def\bX{\boldsymbol X}
\def\bY{\boldsymbol Y}
\def\bZ{\boldsymbol Z}
\def\bM{\boldsymbol M}
\def\bw{\boldsymbol w}
\def\bu{\boldsymbol u}
\def\bv{\boldsymbol v}
\def\bx{\boldsymbol x}
\def\by{\boldsymbol y}
\def\bz{\boldsymbol z}
\def\ba{\boldsymbol a}
\def\bGamma{\boldsymbol \Gamma}
\def\bsigma{\boldsymbol \sigma}
\def\bSigma{\boldsymbol \Sigma}
\def\bT{\boldsymbol T}
\def\bmu{\boldsymbol \mu}
\def\balpha{\boldsymbol \alpha}
\def\bbeta{\boldsymbol \beta}
\newcommand{\vague}{\stackrel{\lower0.2ex\hbox{$\scriptscriptstyle
                    \it{v} $}}{\rightarrow}}
\newcommand{\weak}{\stackrel{\lower0.2ex\hbox{$\scriptscriptstyle
                    \it{w} $}}{\rightarrow}}
\newcommand{\what}{\stackrel{\lower0.2ex\hbox{$\scriptscriptstyle
                    \it{\hat{w}} $}}{\rightarrow}}
\newcommand{\eqdis}{\stackrel{\lower0.2ex\hbox{$\scriptscriptstyle
                    \mathrm{d}$}}{=}}
\newcommand{\distr}{\stackrel{\lower0.2ex\hbox{$\scriptscriptstyle
                    \it{d} $}}{\rightarrow}}
\newtheorem{theorem}{Theorem}[section]
\newtheorem{definition}{Definition}[section]
\newtheorem{proposition}[theorem]{Proposition} % ADDED BY PN
\theoremstyle{definition}
\newtheorem{example}[theorem]{Example}
\theoremstyle{remark}
\newtheorem{remark}[theorem]{Remark}
\begin{document}

\title{Multivariate extreme value theory\footnote{Preliminary version of Chapter~7 of \emph{Handbook on Statistics of Extremes}, edited by Miguel de Carvalho, Raphaël Huser, Philippe Naveau and Brian Reich, to appear at Chapman \& Hall.}}
\author{Philippe Naveau\thanks{LSCE, CNRS/CEA, Gif-sur-Yvette, France. E-mail: philippe.naveau@lsce.ipsl.fr} \and Johan Segers\thanks{Department of Mathematics, KU Leuven, Celestijnlaan 200B 02.28, 3001 Heverlee, Belgium, and LIDAM/ISBA, UCLouvain. E-mail: jjjsegers@kuleuven.be}}
\date{\today}
\maketitle

\begin{abstract}
When passing from the univariate to the multivariate setting, modelling extremes becomes much more intricate. In this introductory exposition, classical multivariate extreme value theory is presented from the point of view of multivariate excesses over high thresholds as modelled by the family of multivariate generalized Pareto distributions. The formulation in terms of failure sets in the sample space intersecting the sample cloud leads to the over-arching perspective of point processes. Max-stable or generalized extreme value distributions are finally obtained as limits of vectors of componentwise maxima by considering the event that a certain region of the sample space does not contain any observation. 
\end{abstract}

\section{Introduction}
\label{ch7:sec:intro}
\begin{comment}
Multivariate risk regions - more complicated than in 1D: min, max, sum, \ldots
Standardizing margins vs general margins: exponential, Gumbel: additive formulation, closer to classical statistics, familiar exponential distribution. Not: Fréchet, uniform.

Distributions for multivariate extremes come in many flavours: excesses over high thresholds, maxima, number of sample points in remote failure sets (point processes), \ldots Will explain and treat these one by one and then show how they are connected.

Overview of the chapter.

Sketch of proof for certain properties, if important and not too technical. If needed, defer additional material to supplement.

Visual illustrations and conceptual diagrams to explain concepts and their interconnections.

Hardly treated in this chapter: simulation, inference. Refer to more advanced chapters later on in the book.

Chapter limited to ``classical'' theory based on max-stable models and concepts derived from those; not treated: asymptotic independence à la Ledford and Tawn, conditional extremes à la Heffernan and Tawn. Refer to later chapters for those?
\end{comment}

%\pn{I like the Introduction. I have added a few suggestions and comments (take what you need and remove otherwise)} \js{Merci!}

When modelling extremes, the step from one to several variables, even just two, is huge. 
In dimension two or higher, even the very definition of an extreme event is not clear-cut. 
In a multivariate set-up, several questions arise: how to order points in the first place? How to define the maximum of a multivariate sample? 
Similarly, when does a joint observation of several variables exceed a high threshold? Do all coordinates need to be large simultaneously, or just some? Or perhaps it is possible to reduce the multivariate case to the univariate one by applying an appropriate statistical summary such as a projection?
For example, hydrologists often study heavy rainfall by adding precipitation intensities  over  space (regional analysis) or  over time (temporal aggregation).  Given this sum is large, one can wonder how to model extremal dependencies then. 
%\pn{I'll add some sentences, le vent et la précipitation.}

%It turns out that to all these questions, there is no a unique answer. As a consequence, multivariate extreme value theory is much less clear cut than its univariate counterpart and it branches out in several theories and accompanying statistical methodologies. In this chapter, we will focus on the class of models that arises from mimicking the classical univariate theory of limit distributions of maxima: the multivariate extreme value distributions. 
%\pn{Ce dernier paragraphe peut être mal compris, c-à-d MEVT ne donne pas un modèle unique. Toutefois, les fonctions extremales $l(.)$, $A(.)$, $V(.)$ et $\Lambda(.)$ sont toutes liées. 
%Pour moi, sous certaines conditions, le modèle sous-jacent est, bien sur, non paramétrique, mais a une architecture commune, voir Figure 1.5. On pourait même écrire (sauf si on souhaite introduire la hidden RV).} \js{}
Despite the variety of questions, it turns out that a common architecture can be built to answer all of them. 
This chapter will highlight how to move from the modeling of multivariate exceedances to a point-process view of extremal event analysis, and finally to connect these two approaches with multivariate block maxima modeling. Historically, the research developments in multivariate extreme value theory have followed a different story-line starting from block maxima, see e.g.\ the chronology between \cite{dehaan:1984} and \cite{Rootzen:Tajvidi:2006}. 
Pedagogically, starting with the multivariate extension of the generalized Pareto distribution appears to be simpler to explain. % and conceptually 
%" Bien sur, tu mentionnes aussi ces points dans deux paragraphes.

Closely connected to this is the view that a point $\bx = (x_1,\ldots,x_D)\T$ in $D$-dimensional space exceeds a threshold $\bu = (u_1,\ldots,u_D)\T$ as soon as there exists a coordinate $j = 1,\ldots,D$ such that $x_j > u_j$. In words, the point $\bx$ is not dominated entirely by $\bu$, that is, it is \emph{not} true that $\bx \le \bu$, which is written more briefly as $\bx \not\le \bu$. The peaks-over-thresholds (note the double plural) approach that arises from this concept of excess over a high threshold leads to the family of multivariate generalized Pareto distributions.

Multivariate extreme value and multivariate generalized Pareto distributions are two sides of the same coin. They can be understood together by viewing a sample of multivariate observations as a cloud of points in $\R^D$. The vector of componentwise maxima is dominated by a multivariate threshold if and only if no sample point exceeds that threshold if and only if the L-shaped risk region anchored at the vector of thresholds in its elbow does not contain any sample point (Figure~\ref{ch7:fig:max-pot-L}).

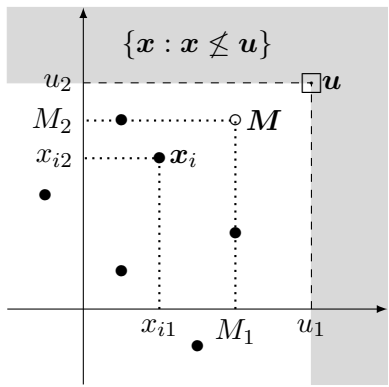
\begin{figure}
\begin{center}
\begin{tikzpicture}[scale=1]
    \filldraw[color=gray!30] (-1,3) -- (3,3) -- (3,-1) -- (4,-1) -- (4,4) -- (-1,4) -- cycle;

    \draw[->,>=latex] (-1,0) -- (4,0);
    \draw[->,>=latex] (0,-1) -- (0,4);

    \draw (3,3) node {$\boxdot$};
    \draw (3,3) node[right] {$\bu$};
    \draw (3,0) node[below] {$u_1$};
    \draw (0,3) node[left] {$u_2$};
    \draw[dashed] (3,0) -- (3,3) -- (0,3);

    \draw (1.5,3.5) node {$\{\bx : \bx \not\le \bu\}$};

    \draw (2,1) node {$\bullet$};
    \draw (-0.5,1.5) node {$\bullet$};
    \draw (1,2) node {$\bullet$};
    \draw (1,2) node[right] {$\bx_i$};
    \draw (1,0) node[below] {$x_{i1}$};
    \draw (0,2) node[left] {$x_{i2}$};
    \draw[dotted,thick] (1,0) -- (1,2) -- (0,2);
    \draw (1.5,-0.5) node {$\bullet$};
    \draw (0.5,2.5) node {$\bullet$};
    \draw (0.5,0.5) node {$\bullet$};

    \draw (2,2.5) node {$\circ$};
    \draw (2,2.5) node[right] {$\bM$};
    \draw (2,0) node[below] {$M_1$};
    \draw (0,2.5) node[left] {$M_2$};
    \draw[dotted,thick] (2,0) -- (2,2.5) -- (0,2.5);
\end{tikzpicture}
\end{center}
\caption{\label{ch7:fig:max-pot-L} The pair $\bM = (M_1,M_2)\T$ of componentwise maxima of a sample of observations in the plane is dominated by a threshold vector $\bu = (u_1,u_2)\T$ if and only if there is no point $\bx_i = (x_{i1},x_{i2})\T$ in the sample that is situated in the L-shaped risk region $\{ \bx : \bx \not\leq \bu \}$ (in gray) defined by $\bu$ in its elbow. Points in the gray region are considered to exceed the multivariate threshold $\bu$.}
\end{figure}

The myriad of possibilities of interactions between two or more random variables requires an entire new set of concepts to model multivariate extremes. Conceptually, it helps to think of the modelling strategy as comprising two parts: first, modelling the univariate marginal distributions of the $D$ variables, and second, modelling their dependence. To make the analogy with the multivariate Gaussian distribution: the univariate distributions of the variables are parameterized in terms of means and variances, while the dependence between the variables is captured by the correlation matrix.

For extremes, the univariate margins can be modelled by parametric families: the univariate generalized extreme value distributions for maxima and the generalized Pareto distributions for excesses over high thresholds. For each variable $j = 1,\ldots,D$, the real-valued shape parameter $\evi_j$ determines how heavy its tail is, and location and scale parameters complete the model.

Alas, for multivariate extremes, no parametric model is able to capture all possible dependence structures. There is no analogue of the correlation matrix to entirely describe all possible interactions between extremes, not even in the classical set-up of multivariate maxima or excesses over high thresholds. \chg{Note moreover that covariances are poorly suited to quantify dependence between variables that are possibly heavy-tailed: first and second moments may not even exist.} Whereas dependence in the Gaussian world can be understood in terms of the classical linear regression model, a full theory of \emph{tail dependence} does not admit such a simple formulation. In the literature, one can find several equivalent mathematical descriptions, some more intuitive than others. In this chapter, we will approach the topic from the angle of multivariate generalized Pareto distributions, using a language that we hope is familiar to non-specialists. 
%We view upon an extreme point $\bZ = (Z_1,\ldots,Z_D)\T$ as being generated by a common large shock $E = \max(Z_1,\ldots,Z_D)$ to all variables. This shock is then modulated by individual corrections $S_j = Z_j - E \le 0$, possibly different from one variable to another.
%\pn{Ces deux dernières phrases me paraissent être au mauvais endroit. Elles sont compliquées à comprendre si on ne connait pas la différence entre le rayon et le pseudo angular mesure. A déplacer?}
%\js{Je fais référence ici à la représentation $\bZ = E + \bS$, pas à la décomposition polaire. J'ai rajouté des symboles pour clarifier (?). Je préfèrerais garder les phrases.}

In the same way as Pearson's linear correlation coefficient describes linear dependence between variables in a way that is unaffected by their location and scale, it is convenient to describe dependence between extremes when each marginal distribution has been individually transformed to the same standardized one. \chg{Removing marginal features allows for sole interpretation of the extremal dependence structure.} In this chapter, we will choose the unit-exponential distribution as pivot, in the same way as the standard normal distribution appears in classical statistics or the uniform distribution on $[0, 1]$ in a copula analysis. The advantage of our choice with respect to other ones in the literature (the unit-Fréchet and unit-Pareto distributions, for instance) is that the formulas are additive rather than multiplicative, thereby resembling, at least superficially, classical models in statistics.
In addition, the loss-of-memory property of the unit exponential distribution facilitates the derivation of properties of the multivariate generalized Pareto distribution, e.g., see Table~\ref{ch7:tab:CheatSheet}.
% and Appendix~\ref{...}

Multivariate generalized Pareto distributions are introduced in Section~\ref{ch7:sec:mgp}. Viewing high threshold excesses in terms of risk regions intersecting the sample cloud brings us to the over-arching perspective of point processes in Section~\ref{ch7:sec:ppp}. \chg{From there, it is but a small step to the study of multivariate extreme value distributions, by which we mean max-stable distributions, in Section~\ref{ch7:sec:mev}.}
Even though the full theory of tail dependence requires a nonparametric set-up, it is convenient for statistical practice to impose additional structure, for instance in the form of parametric models. These will appear prominently in the later chapters in the book, but we already briefly mention a few common examples in Section~\ref{ch7:sec:par}.
%The aim of this chapter is not to give a complete survey of the field of but rather to endow the reader with a certain conceptual framework of multivariate extremes, in preparation of chapters to come. Still, some further literature is pointed at in Section~\ref{ch7:sec:lit}. \js{To be confirmed if we keep this section.}\pn{I think that we can skip it.}
A correct account of the theory requires a bit of formal mathematical language, and, despite our best efforts, some formulas may look less friendly at first sight. We hope the reader will not be put off by these. For those interested, some more advanced arguments are deferred to the end of the chapter in Section~\ref{ch7:sec:math}.
%In the main text, we have sometimes sacrificed a bit of mathematical rigour in exchange for better readability. 

The models developed in this chapter are unsuitable for dealing with situations where the occurrence of extreme values in two or more variables simultaneously is far less frequent than the occurrence of an extreme value in one variable. 
Heavy rainfall from convective storms, for instance, is spatially localized. The probability of large precipitation at two distant locations at the same time is then relatively much smaller than the probability of such an event at one of the two locations. In the literature, this situation is referred to as asymptotic independence, and models developed in this chapter do not possess the correct lenses to measure the relative changes between joint events and univariate ones of this type. More appropriate models that zoom in on this common situation are developed in 
another chapter in the handbook.
%Chapter~\verb+\ref{ch:cond}+.

\paragraph{Notation.}

For a vector $\bx = (x_1,\ldots,x_D)\T$ and a non-empty set $J \subseteq \{1,\ldots,D\}$, we write $\bx_J = (x_j)_{j \in J}$, a vector of dimension $|J|$, the number of elements in $J$.
Operations between vectors such addition and multiplication are to be understood componentwise. If $\bx = (x_1,\ldots,x_D)\T$ is a vector and $a$ is a scalar, then $\bx - a$ is the vector with components $x_j - a$. The bold symbols $\bone$, $\bzero$ and $\binfty$ refer to vectors all elements of which are equal one, zero, and infinity, respectively. 
Ordering relations between vectors are also meant componentwise: $\bX \le \bu$ means that $X_j \le u_j$ for all components~$j$. The complementary relation is that $\bX \not\le \bu$, which means that $X_1 > u_1$ or \ldots{} or $X_D > u_D$, that is, there exists at least one component $j$ such that $X_j > u_j$. Note that this is different from $\bX > \bu$, which means that $X_j > u_j$ for \emph{all} $j$, that is, $X_1 > u_1$ and \ldots{} and $X_D > u_D$. 
%\js{To be explained in a picture.}
%\pn{I have added the new Figure 1.1.}
%\js{Great, thanks!}
In Figure \ref{ch7:fig:AND.OR}, the \chg{gray} area in the left panel corresponds to  the region  such that $\bX \not\le \bu$  in a bivariate example. The right panel displays the region such that $\bX > \bu$.
%%%%%%%%%%%%%%%%%%%%%%%%
%  
\begin{figure}
\begin{center}
\begin{tikzpicture}
\draw[color=gray,->,>=latex] (0,0) -- (4.5,0);
\draw[color=gray,->,>=stealth] (0,0) -- (0,4.5);
\filldraw[color=gray!50] (3,3) -- (4,3) -- (4,4) -- (3,4) -- cycle; % B 
\filldraw[color=gray!50] (0,3) -- (3,3) -- (3,4) -- (0,4) -- cycle; % A
\filldraw[color=gray!50] (3,0) -- (4,0) -- (4,3) -- (3,3) -- cycle; % C

\draw (2,5) node {$\{\bX \not\le \bu\}$};
\draw (-0.5,3) node {$u_2$};
\draw (3,-0.5) node {$u_1$};
\draw (-0.5+6,3) node {$u_2$};
\draw (2+6,-0.5) node {$u_1$};

\filldraw[color=gray!50] (0+6+2,3) -- (3+6,3) -- (3+6,4) -- (0+6+2,4) -- cycle; 

\draw (2+6,5) node {$\{\bX > \bu\}$};

\draw[color=gray,>=stealth,dashed] (2+6,0) -- (2+6,3);
\draw[color=gray,>=stealth,dashed] (0+6,3) -- (2+6,3);

\draw[color=gray,->,>=stealth] (0+6,0) -- (4+6.5,0);
\draw[color=gray,->,>=stealth] (0+6,0) -- (0+6,4.5);
\end{tikzpicture}

\end{center}
\caption{Extremal regions denoted  by   
    $\{\bX \not\le \bu\}$ (left panel) and $\{\bX > \bu\}$ (right panel) in a schematic bivariate example. 
}
\label{ch7:fig:AND.OR}
\end{figure}
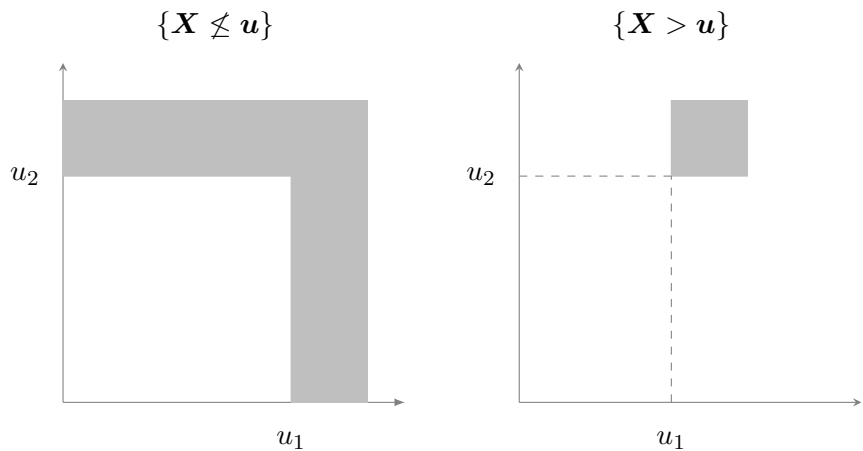

% \begin{figure}
%     \centering
%     \includegraphics[width=.8\linewidth]{Ch7/LaTeX/figures/Chap7.AND.OR.pdf}
%     \caption{Extremal regions defined by the notations 
%     $\{\bX > \bu\}$ (left panel) and $\{\bX \not\le \bu\}$ (right panel) in a schematic bivariate example. 
%     }
%     \label{fig:Chap7.AND.OR}
% \end{figure}
%%%%%%%%%%%%%%%%%%%%%%%%

If $\evi = 0$, then $(\e^{\evi z} - 1)/\evi = z$ and $\evi^{-1} \log(1 + \evi z) = z$ by convention.
%\js{Use $\xi$ rather than $\gamma$ for the shape parameter?}
%\pn{I agree because this will be the case in the other chapters}
%\js{Have introduced the macro $\evi$.}
The indicator variable of an event $A$ is denoted by $\indic_A$ or $\indic(A)$, with value $1$ if $A$ occurs and $0$ otherwise.

\section{Multivariate Generalized Pareto Distributions}
\label{ch7:sec:mgp}
\begin{comment}
- Definition in "conditionally exponential margins": max(Y) = Exp(1) and Y-max(Y) independent of max(Y)
- representations S and T
- general margins
- threshold stability (sketch of proof)
- sub-vectors: conditional multivariate margins (sub-vector)
- more general: conditional linear projections
- use in peaks-over-threshold models: X - u \mid X \not\le u
- Example: X = E + U => representation U
- density
\end{comment}

\paragraph{From univariate to multivariate excesses over high thresholds}

In simple terms, the peaks-over-threshold approach for univariate extremes stipulates that the excess $X-u$ of a random variable $X$ over a high threshold $u$ conditionally on the event $X > u$ can be modelled by the two-parameter family of generalized Pareto distributions. Recall from Chapter~\verb+\ref{ch:whyhow}+ that the conditional distribution of $X - u$ given $X > u$ is approximately $\GP(\sigma_u,\evi)$ with shape parameter $\evi \in \R$ and scale parameter $\sigma_u > 0$, that is,
\[
    \pr(X - u \le x \mid X > u)
    \approx 1 - (1 + \evi x / \sigma_u)_+^{-1/\evi}
    \qquad \text{\chg{uniformly in $x \ge 0$.}}
\]
%for all $x \ge 0$ such that $1 + \evi x / \sigma_u > 0$. 
\chg{The approximation sign is there to indicate that the model only becomes exact in a limiting sense: the difference between the left- and right-hand sides tends to zero as $u$ grows to the upper endpoint of the distribution of $X$, and this uniformly in $x \ge 0$.} In statistical practice, the generalized Pareto distribution is fitted to the observed excesses of a variable over a high threshold. The fitted model is then used as a basis for extrapolation, even beyond the levels observed so far.

As the notation indicates, the scale parameter $\sigma_u$ depends on the threshold $u$. A crucial feature is that, if the high threshold $u$ is replaced by an even higher threshold $v$, the model remains self-consistent: the distribution of excesses over $v$ is again generalized Pareto, with the same shape parameter $\evi$ but a different scale parameter $\sigma_v > u$, which is a function of $\xi$, $\sigma_u$, and $v$.

We would now like to do the same for multivariate extremes. For a random vector $\bX = (X_1,\ldots,X_D)\T$ and a vector of high thresholds $\bu = (u_1,\ldots,u_D)\T$, we seek to model the magnitude of the (multivariate) excess of $\bX$ over $\bu$ conditionally on the event that $\bX$ exceeds $\bu$. But, as already alluded to in the introduction, since $\bX$ and $\bu$ are points in $D$-dimensional space, the meanings of the phrases ``$\bX$ exceeds $\bu$'' and the ``excess of $\bX$ over $\bu$'' are not clear-cut. The most permissive interpretation is to say that for $\bX$ to exceed $\bu$ it is sufficient that there exists at least one $j = 1, \ldots, D$ such that $X_j > u_j$, that is, $\bX \not\le \bu$ (left-hand plot in Figure~\ref{ch7:fig:AND.OR}). Conditionally on this event, the excess is defined as the vector $\bX - \bu = (X_1-u_1,\ldots,X_D-u_D)\T$ of differences, of which at least one is positive, but some others may be negative. As in the univariate case, we seek theoretically justified models for $\bX-\bu$ conditionally on $\bX \not\le \bu$. The threshold vector $\bu$ is taken to be high in the sense that for each $j = 1,\ldots,D$, the probability of the event $X_j > u_j$ is positive but small.

The support of the excess vector $\bX - \bu$ given $\bX \not\le \bu$ requires some closer inspection. As written already, at least one coordinate $X_j - u_j$ must be positive, since there is, by assumption, at least one $j = 1,\ldots,D$ such that $X_j > u_j$. However, the other variables $X_k$ for $k \ne j$ need not exceed their respective threshold $u_k$, and it could thus be that $X_k - u_k \le 0$. The support of the excess vector is therefore included in the somewhat unusual set of points $\bx = (x_1,\ldots,x_D)\T$ with at least one positive coordinate, or formally, such that $\max \bx > 0$. In dimension $D = 2$, this set looks like the letter L written upside down, as the grey area in Figure~\ref{ch7:fig:max-pot-L}. Even for general $D$, we refer to the support of the excess vector as an L-shaped set, even though in dimension $D = 3$, for instance, the support looks more like a large cube from which a smaller cube has been taken out.

\paragraph{Defining multivariate generalized Pareto distributions}

To introduce the family of multivariate generalized Pareto (MGP) distributions, it is convenient to start first on a standardized scale. In dimension one, the generalized Pareto distribution with shape parameter $\evi = 0$ and scale parameter $\sigma = 1$ is just the unit-exponential distribution. If $E$ denotes such a unit-exponential random variable, then for general $\evi \in \R$ and $\sigma > 0$, the distribution of $\sigma (\e^{\evi E} - 1)/\evi$ is $\GP(\sigma,\xi)$. The new ingredient in the multivariate case is the dependence between the $D$ variables, and to focus on this aspect, we first consider  a specific case with standardized margins before we move on to the general case.

There are various ways to introduce and define MGP distributions, see e.g. \cite{Kiriliouk:Rootzen:Segers:2019,Rootzen:Segers:Wadsworth:2018b,Rootzen:Tajvidi:2006}. 
In this section, 
%we will assume that the marginal distributions  of $\bY$ are known. In addition,  
we will construct the MGP family from a common building block:  the unit-exponential distribution.
%, more precisely   $E$ will always represent an univariate  unit exponentially distributed random variable.
%, i.e.
%with the classical survival function $\P(E > x)=\max(1, \exp(-x))$ for real $x$. 
Other choices could have been made, such as the unit-Fréchet distribution, the Laplace distribution, or also the uniform distribution, for those interested in copulas.
From a pedagogical point of view, we believe that the exponential distribution has many advantages. The exponential seed provides a simple additive representation that permits to define a standard MGP distribution, to generate MGP random samples, to check threshold stability and to deduce properties related to linear combinations and marginalization.

As the reader will notice, the support of the distribution in the next definition includes points with some coordinates equal to minus infinity. This is a theoretical artefact that comes from the chosen scale, and is essentially due to the limits $\log 0 = -\infty$, or, conversely, $\e^{-\infty} = 0$.

\begin{definition}
\label{ch7:def:mgp}
A random vector $\bZ = (Z_1,\ldots,Z_D)\T$ in $[-\infty,\infty)^D$ follows a \emph{standard multivariate generalized Pareto (MGP)} distribution if it satisfies the following two properties:
\begin{enumerate}[(i)]
\item the random variable $E = \max(Z_1,\ldots,Z_D)$ follows a unit-exponential distribution, $\pr(E > z) = \e^{-z}$ for $z \ge 0$;
\item the non-positive random vector 
\begin{equation} 
\label{ch7:eq:Z2S}
    \bS = \bZ - E = (Z_1-E,\ldots,Z_D-E)\T 
\end{equation}
%$= (Z_1-E,\ldots,Z_D-E)\T$ 
is independent of $E$ and $\pr(S_j > -\infty) > 0$ for all $j = 1,\ldots,D$.
\end{enumerate}
Let $\mgp(\bone, \bzero, \bS)$ denote the distribution of $\bZ$.
\end{definition}

Condition~(i) in Definition~\ref{ch7:def:mgp} implies that with probability one, at least one component of $\bZ$ is positive. This justifies the role of $\bZ$ as a model for the vector $\bX - \bu$ conditionally on $\bX \not\le \bu$, where $\bX$ is a random vector of interest and $\bu$ is a vector of thresholds.
The meaning of the parameters $(\bone, \bzero)$ will become clear in Definition~\ref{ch7:def:mgp:general}.

The support of a standard MGP vector is included in the set
\begin{equation}
\label{ch7:eq:LL}
    \LL = \{ \bx \in [-\infty,\infty)^D : \max \bx > 0 \}. 
\end{equation}
The support of an individual variable $Z_j$ is potentially the whole of $[-\infty,\infty)$, so that $Z_j$ is in general not a unit-exponential random variable. Still, conditionally on $Z_j > 0$, the variable is indeed unit-exponential: as $Z_j = E + S_j$ with $S_j \le 0$ and independent of the unit-exponential random variable $E$, we have, for $x \ge 0$,
\begin{align}
\nonumber
    \pr(Z_j > x) 
    &= \pr(E + S_j > x) \\
    &= \e^{-x} \expec(\e^{S_j}),
    \qquad j = 1, \ldots, D.
\label{ch7:eq:PZjx}
\end{align}
A further special case that often arises from a common marginal standardization is that the probabilities $\pr(Z_j > 0) = \expec(\e^{S_j})$ are equal for all $j = 1,\ldots,D$, but in Definition~\ref{ch7:def:mgp}, this need not be the case.

The parameters $(\bone,\bzero)$ in Definition~\ref{ch7:def:mgp:general} refer to special values of marginal scale and shape parameters, respectively. The general case is as follows.

\begin{definition}
\label{ch7:def:mgp:general}
A random vector $\bY = (Y_1,\ldots,Y_D)\T$ has a \emph{multivariate generalized Pareto} (MGP) distribution if and only if it is of the form
\begin{equation}
\label{ch7:eq:Z2Y}
    \bY = \bsigma \frac{\exp(\bevi \bZ) - 1}{\bevi}
%    Y_j = \sigma_j \frac{\exp(\evi_j Z_j) - 1}{\evi_j}, \qquad j = 1,\ldots,D,
\end{equation}
for scale and shape parameters $\bsigma \in (0,\infty)^D$ and $\bevi \in \R^D$, respectively, where $\bZ \sim \mgp(\bone,\bzero,\bS)$ follows a standard MGP distribution. Let $\mgp(\bsigma, \bevi, \bS)$ denote the distribution of $\bY$. 
\end{definition}

Inverting \eqref{ch7:eq:Z2Y}, we find that a general MGP vector $\bY$ can be reduced to a standard one via
\begin{equation}
\label{ch7:eq:Y2Z}
    \bZ = \frac{1}{\bevi} \log (1 + \bevi \bY / \bsigma),
\end{equation}
where, as usual, all operations are meant componentwise and for $\xi_j = 0$, the formula is meant in a limiting sense, $\lim_{\xi_j \to 0} \evi_j^{-1} \log(1 + \evi_j y_j / \sigma_j) = y_j / \sigma_j$.

For any parameters $\sigma > 0$ and $\evi \in \R$ and for any $z \in \R$, the sign (positive, negative, or zero) of the transformed outcome $\sigma (\e^{\evi z}-1)/\evi$ is the same as that of $z$ itself. For each $j = 1,\ldots,D$, the sign of $Y_j$ in \eqref{ch7:eq:Z2Y} is thus the same as that of $Z_j$. This means that, with probability one, at least one component of $\bY$ is positive, but some components may be negative. If $\evi_j > 0$, the lower bound of $Y_j$ is $-\sigma_j/\evi_j$ rather than $-\infty$.

\begin{remark}
If $\bZ$ follows a standard MGP distribution, the distribution of $\bY = \e^{\bZ}$ is called a \emph{multivariate Pareto distribution}. Its support is included in the set $\{ \bx \in [0, \infty)^d : \max \bx > 1 \}$ and the conditional distribution of $Y_j$ given $Y_j > 1$ is a unit-Pareto.
% \js{Refer to later chapters in the book where these are used?}
\end{remark}

\paragraph{MGP distributions as a common-shock model for dependence}

The distribution function\footnote{Or joint cumulative distribution function, to be precise.} of $\bZ \sim \mgp(\bone,\bzero,\bS)$ is determined by the one of $\bS$: from $\bZ = E + \bS$ as in Definition~\ref{ch7:def:mgp}, we get
\begin{equation}
\label{ch7:eq:S2Z}
    \pr(\bZ \le \bx) = \int_0^{\infty} \pr(\bS + z \le \bx) \, \e^{-z} \, \diff z,
    \qquad \bx \in \R^D.
\end{equation}
Conversely, any random vector $\bS = (S_1,\ldots,S_D)\T$ with values in $[-\infty, 0]^D$ satisfying $\max(S_1,\ldots,S_D) = 0$ and $\pr(S_j > -\infty) > 0$ for all $j = 1,\ldots,D$ specifies an MGP distribution function via the right-hand side of Eq.~\eqref{ch7:eq:S2Z}. Hence, specifying an MGP distribution is equivalent to specifying the distribution of a random vector $\bS$ with the two properties in the previous sentence. Such random vectors can be easily constructed by defining
\begin{equation}
\label{ch7:eq:T2S}
    \bS = \bT - \max \bT,
\end{equation}
where $\bT = (T_1,\ldots,T_D)\T$ represents any random vector in $[-\infty,\infty)^D$ such that $\max \bT > -\infty$ almost surely and $\pr(T_j > -\infty) > 0$ for all $j = 1,\ldots,D$. \chg{Choosing a parametric model for $\bT$ is a convenient way to construct one for $\bS$ and thus for $\bZ \sim \mgp(\bone,\bzero,\bS)$.}

% In dimension $D = 2$, the shape of $\LL$ is that of the letter L upside down, whence its name. \js{Already written earlier.}
%By construction, the vector $\bS$ satisfies $\max \bS = 0$ and each of its components are non-positive. Interestingly, any random vector $\bS$ in $[-\infty,\infty)^D$ such that $\max \bS = 0$ and $\pr(S_j > -\infty) > 0$ may appear in Definition~\ref{ch7:def:mgp}. 

The additive structure obtained from Definition~\ref{ch7:def:mgp}, 
\begin{equation}
\label{ch7:eq:Z additive model} % $\bY \sim \Norm(\bmu, \bSigma)$
    \bZ = E+ \bS = E + \bT - \max \bT,
\end{equation}
% highlights some important  features of a standard mgp. 
allows us to comment on  the main features of a standard MGP. 
The common factor, $E$, is the main driver of the system for two reasons. Its  value equally  impacts  all components of $\bZ$ modulo the negative shift produced by $\bS$, and, as $\max \bZ=E$, the largest values of $\bZ$ will always be due to $E$.  
Each component of the non-positive vector $\bS$ indicates how far away the corresponding component of $\bZ$ is from $E$.

For example, if $S_1 = \cdots = S_D = 0$ with probability one, the random vector $\bZ = \mgp(\bone, \bzero, \bS)$ always lies on the diagonal,
\begin{equation}
\label{ch7:eq:compdep}
    \bZ = E \bone = (E,\ldots,E)\T,
\end{equation}
referred to the \emph{complete dependence} case. 
Similarly, if $\min \bS$ is close to zero with large probability then $\bZ$ is close to the point $E \bone$ on the diagonal with large probability, and consequently the dependence structure within $\bZ$ is strong. Likewise, the probability that \emph{all} components of $\bZ$ are positive is 
\[ 
    \pr(\bZ > \bzero) 
    = \pr(Z_1 > 0 \text{ and } \ldots \text{ and } Z_D > 0)
    = \expec(\e^{\min \bS}).
\]
The more concentrated the distribution of $\min \bS$ is around zero, the larger the probability $\pr(\bZ > \bzero)$ becomes.
%\js{À interpréter.}
%\pn{Typo? $\pr(\bZ > \bzero)$ instead of $\pr(\bX > \bzero)$? Also, it reads as   this sentence is only true for  the asymptotic independent case, so a new paragraph needed? }

Because of the common unit-exponential factor $E$ in Eq.~\eqref{ch7:eq:Z additive model}, the components $Z_1,\ldots,Z_D$ of the MGP vector $\bZ$ can never become independent.
%\sout{To illustrate this, we can study the following trivariate example with $\bS=(0, \delta, 1/\delta)\T$ in \eqref{ch7:eq:Z additive model} where $\delta$ is a negative constant (a.s.). If $\delta$ goes to zero, then the dependence between $Z_1$ and $Z_2$ is strengthened, while $Z_3$ becomes independent of $(Z_1,Z_2)\T$.} 
%\js{I don't agree: in the example, $\bZ\T = (Z_1,Z_2,Z_3) = (E, E + \delta, E + 1/\delta)$, so the three components are perfectly dependent (the copula of $\bZ$ is the comonotone copula), whatever the value of $\delta$.}
%\pn{ok. I proposed to remove this example. May find a new one with $\delta$ is rv like a Gamma? }
%\js{I think the main issue here is to define the opposite of complete dependence, which is not independence.}
%To obtain the complete independent case with \eqref{ch7:eq:Z additive model} is tricky (degenerate case). 
%One possibility to approach it is to define
 % indicates how far it is possible to deviate from the diagonal. 
%The multivariate vector $\bT$ in \eqref{ch7:eq:T2S}  drives the dependence structure in $\bX$ and it is called the generator.  
% Two extreme cases of $\bS$ are the following:
% \begin{itemize}
% \item $S_1 = \ldots = S_D = 0$ almost surely. Then $\bZ = (E, \ldots, E)$ with $E$ an exponential random variable, the same one for all $D$ components. This case is referred to as \emph{complete dependence}.
% \item We have 
Instead, to describe the opposite of complete dependence, consider for $j = 1,\ldots,D$ the event 
\[ 
    A_j = \left\{ S_j = 0 \text{ and } S_k = -\infty \text{ for all } k \ne j \right\}.
\]
The events $A_1,\ldots,A_D$ are mutually exclusive. Now suppose that 
\begin{equation}
\label{ch7:eq:asyindepS}
\left.
\begin{array}{l}
\pr(A_1) + \cdots + \pr(A_D) = 1; \\[1ex]
\pr(A_j) > 0, \qquad j = 1,\ldots,D.
\end{array}
\right\}
\end{equation}
Then the random vector $\bZ \sim \mgp(\bone, \bzero, \bS)$ is of the form 
\[
    \bZ = (-\infty,\ldots,-\infty,E,-\infty,\ldots,-\infty)\T, 
\]
where the unit-exponential random variable $E$ appears at the $j$-th place, with $j$ chosen randomly in $\{1,\ldots,D\}$ with probabilities $\pr(A_j)$. This distribution models the situation where exactly one variable is extreme at a time, a situation referred to as \emph{asymptotic independence}. When translated to multivariate maxima in Section~\ref{ch7:sec:mev}, we will see that it corresponds to independence in the usual sense.

%Strange how it may seem, we will see later that this case corresponds to \emph{(asymptotic) independence}
%\pn{I will remove this last sentence because ``later" is vague}.  
% \end{itemize}
%we have  $\pr(\bX > \bzero) = \expec[\exp(\min \bS)]$ and

%One note of caution is that  a strong (weak) dependence  strength within $\bZ$ is not necessarily linked a strong (weak) dependence  among   $\bZ$, or equivalently among $\bT$. 

\paragraph{Generating MGP distributions}

By construction, the MGP family corresponds to a non-parametric class as the choice $\bT$ in \eqref{ch7:eq:T2S} is basically free. Still, for most applications, parametric families facilitate interpretation and statistical inference. The simplest way to build a parametric MGP distribution is to impose a parametric form in $\bT$ in Eq.~\eqref{ch7:eq:T2S}, for instance, \chg{a random vector with independent components or a multivariate Gaussian distribution.
In combination with Eq.~\eqref{ch7:eq:Z additive model}, this way of specifying an MGP is particularly convenient for Monte Carlo simulation.}

Another model-building strategy, slightly more complicated but bringing new insights into MGP dependence structures, is based on random vectors of the form
%an asymptotic argument. 
%\sout{The use of mgp distributions is to model the conditional distribution of $\bX - \bu$ given that $\bX \nleq \bu$, where $\bX$ is a random vector of interest and $\bu$ is a threshold vector such that all $D$ marginal excess probabilities $\pr(X_j > u_j)$ are positive but small.} 
%
%\js{Sounds great.}
%Formally, mgp distributions arise as the class of limit distributions of \sout{such} multivariate excesses as each marginal threshold \sout{$u_j$} grows to the upper endpoint of the respective marginal distribution
%\pn{, see Proposition \ref{ch7:prop:mgpLam} for details}. \sout{An interesting} \pn{A fundamental} case that underlies the \pn{asymptotically based} construction of parametric mgp models is when
\begin{equation}
\label{ch7:eq:XEU}
    \bX = E + \bU,
\end{equation}
where $E$ is a unit-exponential random variable and $\bU = (U_1,\ldots,U_D)\T$ is a random vector in $[-\infty, \infty)^D$, independent of $E$ and such that $0 < \expec(\e^{U_j}) < \infty$ for all $j = 1,\ldots,D$. Since the maximal coordinate of $\bU$ is not necessarily zero, the random vector $\bU$ is in general not a possible vector $\bS$ in Definition~\ref{ch7:def:mgp}, so that $\bX$ in Eq.~\eqref{ch7:eq:XEU} is not necessarily an MGP random vector. Nevertheless, high-threshold excesses of $\bX$ can be shown to be asymptotically MGP distributed in the sense that
\[
    \lim_{u \to \infty} \pr( \bX - u \bone \le \bx \mid \max \bX > u ) = \pr(\bZ \le \bx),
    \qquad \bx \in \R^D,
\]
for $\bZ \sim \mgp(\bone, \bzero, \bS)$, where the distribution of $\bS$ is linked to the one of $\bU$ in the following way: for $\bx \in \R^D$, we have
\begin{equation}
\label{ch7:eq:U2S}
    \pr(\bS \le \bx)
    = \frac{\expec\{\e^Q \indic ( \bU \le \bx + Q ) \}}{\expec[\e^Q]}, \quad
    \text{where } Q = \max(U_1,\ldots,U_D).
\end{equation}

\chg{Eq.~\eqref{ch7:eq:XEU} and~\eqref{ch7:eq:U2S} show how MGP distributions can arise from distributions that are themselves not MGP.}
Eq.~\eqref{ch7:eq:U2S} is mostly of theoretical nature and will be used below to formulate some essential properties of MGP distributions. In practice, we will use other formulas below to compute the $\mgp(\bone, \bzero, \bS)$ probability density from the one of $\bU$. In this way, specific choices of $\bU$ lead to popular parametric models for MGP distributions. Letting $\bU$ have a Gaussian distribution produces the popular Hüsler--Reiss MGP distribution, for instance; see Section~\ref{ch7:sec:par}.
%Although similar in appearance, Equations~\eqref{ch7:eq:Z additive model} and \eqref{ch7:eq:XEU} are different as the distribution of $\bX$ in Eq.~\eqref{ch7:eq:XEU} is in general not mgp, unless $\max(U_1,\ldots,U_D) = 0$ with probability one.

\chg{In our study of the MGP distribution so far, we have introduced several different, but related, random vectors.} The following diagram provides an overview:
\begin{equation}
\label{ch7:eq:TUSZY}
\begin{tikzpicture}[scale=1,baseline=(current bounding box.center)]
    \node (T) at (0, 0.5) {$\bT$};
    \node (U) at (0, -0.5) {$\bU$};
    \node (S) at (2, 0) {$\bS$};
    \node (Z) at (4, 0) {$\bZ$};
    \node (Y) at (6, 0) {$\bY$};

    \draw[->,>=latex] (T) -- (S) node[above,midway,sloped]{\eqref{ch7:eq:T2S}};
    \draw[->,>=latex] (U) -- (S) node[below,midway,sloped]{\eqref{ch7:eq:U2S}};
    \draw[->,>=latex] (S) to[bend left] (Z);
    \draw (3, 0.6) node {\eqref{ch7:eq:S2Z}};
    \draw[->,>=latex] (Z) to[bend left] (S);
    \draw (3,-0.6) node {\eqref{ch7:eq:Z2S}};
    \draw[->,>=latex] (Z) to[bend left] (Y);
    \draw (5, 0.6) node {\eqref{ch7:eq:Z2Y}};
    \draw[->,>=latex] (Y) to[bend left] (Z);
    \draw (5,-0.6) node {\eqref{ch7:eq:Y2Z}};
\end{tikzpicture}
\end{equation}
The numbers above and below the arrows indicate the equations where the corresponding relations are detailed, as we summarize next.

Random vectors $\bT$ and $\bU$ are two different entry points to conveniently generate random vectors $\bS$. We say that a particular distribution is a $\bT$-generator or $\bU$-generator of an MGP distribution if $\bS$ is obtained by letting $\bT$ in Eq.~\eqref{ch7:eq:T2S} or $\bU$ in Eq.~\eqref{ch7:eq:U2S} have that particular distribution. We will see examples of this when presenting some parametric models in Section~\ref{ch7:sec:par}.

The two arrows on the left-hand side of \eqref{ch7:eq:TUSZY} go only one way. This indicates that the distributions of $\bT$ and $\bU$ are not identified by $\bS$. In fact, different choices for the distribution of $\bT$ may lead to the same $\bS$, and similarly for $\bU$. For instance, applying the same common location shift to the components of $\bT$ does not change the position of $\bT - \max \bT$.

The arrows between $\bS$ and $\bZ$ in diagram~\eqref{ch7:eq:TUSZY} signify that the random vector $\bS$ captures the dependence between the $D$ components of $\bZ \sim \mgp(\bone, \bzero, \bS)$ and that the distribution of $\bS$ can in turn be identified from the one of $\bZ$. Finally, passing between the standard case $\bZ$ and the general case $\bY \sim \mgp(\bsigma, \bevi, \bS)$ is just a matter of marginal transformations, and this is the meaning of the arrows on the right-hand side of the diagram.

% Modelling mgp distributions by transformations of other distributions is not at all intuitive.
% Suitably chosen dependence coefficients may then provide useful summary measures. 

\paragraph{Tail dependence coefficient}

Suitably chosen dependence coefficients facilitate working with MGP distributions.
To motivate the most common one, let $(Z_1,Z_2)$ be a standard MGP random pair generated by $(S_1,S_2)$ as in Definition~\ref{ch7:def:mgp}. 
Further, let the levels $x_1, x_2 \ge 0$ be such that $\pr(Z_1 > x_1) = \pr(Z_2 > x_2)$. For such $x_1$ and $x_2$, the conditional probability that one of the two variables $Z_j$ exceeds its threshold $x_j$ given that the other variable does so too is
\cite[Proposition~19]{Rootzen:Segers:Wadsworth:2018b}
\begin{align}
\nonumber
    \pr(Z_1 > x_1 \mid Z_2 > x_2)
    &= \pr(Z_2 > x_2 \mid Z_1 > x_1) \\
    &= \expec \left[ \min \left\{ \frac{\e^{S_1}}{\expec(\e^{S_1})}, \frac{\e^{S_2}}{\expec(\e^{S_2})} \right\} \right] =: \chi.
\label{ch7:eq:chi}
\end{align}
The identity is true whatever the values of $x_1,x_2 \ge 0$, as long as the marginal excess probabilities are the same.

The tail dependence coefficient $\chi$ takes values between $0$ and $1$. Since $S_1 = \min(Z_1-Z_2,0)$ and $S_2 = \min(Z_2-Z_1,0)$, the two boundary values of $\chi$ can be interpreted as follows:
\begin{itemize}
\item The case $\chi = 0$ occurs if and only if one of $Z_1$ and $Z_2$ is positive and the other one is $-\infty$. The interpretation is that large values can occur in only one variable at the time---recall that $Z_j$ is a model for the excess $X_j - u_j$ of a variable $X_j$ over a high threshold $u_j$. This case is referred to as \emph{asymptotic independence}.
\item The case $\chi=1$ can only occur if $Z_1 = Z_2$ almost surely. In this case of \emph{complete dependence}, extreme values always appear simultaneously in the two variables, and their magnitudes (after marginal standardization) are the same. 
\end{itemize}
In case of asymptotic independence ($\chi = 0$), the MGP distribution is an uninformative model for describing extremal dependence. In that case, there exists other dependence coefficients and models that are far more adequate. 
We refer to 
later chapters in the handbook
% Chapters~\verb+\ref{ch:dep}+ and~\verb+\ref{ch:cond}+ 
for a detailed coverage.

\paragraph{Stability properties}

Let $\bX$ be a general random vector and $\bu$ a vector of high thresholds. If an MGP distribution serves to model $\bX - \bu$ conditionally on $\bX \not\le \bu$, then for an even higher threshold vector $\bv \ge \bu$, we can compute the distribution of $\bX - \bv$ conditionally on $\bX \not\leq \bv$ in two ways: either directly from $\bX$, or by applying first the MGP model to excesses over $\bu$ and then conditioning these excesses further to exceed the difference $\bv - \bu$. Ideally, both procedures should give the same answer, at least in a limiting sense. A desirable property of MGP distributions is therefore their \emph{threshold stability}, as was explained for their univariate counterparts in the beginning of this section. The following two propositions, derived from Proposition~4 in \cite{Rootzen:Segers:Wadsworth:2018b}, assert that this stability property holds in the multivariate case too, first for the standardized case and subsequently for the general case.

\begin{proposition}[Threshold stability, standard]
\label{ch7:prop:thrstable}
Let $\bZ \sim \mgp(\bone, \bzero, \bS)$ and let $\bu = (u_1,\ldots,u_D)\T \in [0,\infty)^D$. Then 
\[
    \rbr{\bZ - \bu \mid \bZ \nleq \bu} \sim \mgp(\bone, \bzero, \bS_{\bu})
\]
where the distribution of $\bS_{\bu}$ is determined as in Eq.~\eqref{ch7:eq:U2S} with $\bU$ equal to $\bS - \bu$.
%\begin{equation}
%\label{ch7:eq:Su}
%    \pr(\bS_{\bu} \in \point)
%    = \frac{\expec[\exp\{\max(\bS-\bu)\} \indic \{ (\bS-\bu)-\max(\bS-\bu) \in \point\} ]}{\expec[\exp\{\max(\bS-\bu)\}]}.
%\end{equation}
If $u_1 = \ldots = u_D$, then $\bS_{\bu}$ has the same distribution as $\bS$, so that, for $u \ge 0$, the distribution of $\bZ - u \bone \mid \bZ \nleq u \bone$ is the same as that of $\bZ$.
\end{proposition}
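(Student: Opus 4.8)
The plan is to compute the conditional law of $\bZ - \bu$ given $\bZ \nleq \bu$ directly from Definition~\ref{ch7:def:mgp} and the representation $\bZ = E + \bS$, and show that it matches the $\mgp(\bone,\bzero,\bS_{\bu})$ law obtained by feeding $\bU = \bS - \bu$ into Eq.~\eqref{ch7:eq:U2S}. First I would observe that since $\bu \in [0,\infty)^D$ we have $\bZ \nleq \bu$ if and only if $\max(\bZ - \bu) > 0$, i.e.\ $E + \max(\bS - \bu) = E - Q' > 0$ where I write $Q' = -\max(\bS-\bu) = \max \bU$ with $\bU := \bS - \bu$; note $Q' \ge 0$ because $\max \bS = 0$ implies $\max(\bS - \bu) \le 0$. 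So the conditioning event is exactly $\{E > Q'\}$, and since $E$ is unit-exponential and independent of $\bS$ (hence of $\bU$ and $Q'$), the loss-of-memory property gives that, conditionally on $\{E > Q'\}$ and on $\bU = \bu'$, the overshoot $E - Q'$ is again unit-exponential, independent of $\bU$. This is the key mechanism, and it is really the multivariate analogue of the univariate threshold-stability argument.

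Next I would identify the pieces. Conditionally on $\bZ \nleq \bu$, write $\bZ - \bu = (E - Q') + (\bU + Q')$. Set $E_{\bu} := E - Q'$ and $\bS_{\bu} := \bU + Q' = \bU - \max\bU$; then $\max \bS_{\bu} = 0$ and $\bZ - \bu = E_{\bu} + \bS_{\bu}$, so this is precisely the additive representation of a standard MGP vector provided (a) $E_{\bu}$ is unit-exponential, (b) $E_{\bu}$ is independent of $\bS_{\bu}$, and (c) $\pr(S_{\bu,j} > -\infty) > 0$ for each $j$. For (a) and (b): the conditional joint law of $(E_{\bu}, \bU)$ given $\{E > Q'\}$ factorizes, because for a Borel set $B$ and $t \ge 0$,
\[
    \pr(E - Q' > t,\ \bU \in B \mid E > Q')
    = \frac{\expec\!\left[\indic(\bU \in B)\, \pr(E > t + Q' \mid \bU)\right]}{\pr(E > Q')}
    = \frac{\expec\!\left[\indic(\bU \in B)\, \e^{-Q'}\right]}{\expec[\e^{-Q'}]} \cdot \e^{-t},
\]
which exhibits the product of a unit-exponential factor $\e^{-t}$ and a factor depending only on $\bU$. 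Hence conditionally $E_{\bu}$ is unit-exponential and independent of $\bU$, therefore of $\bS_{\bu} = \bU - \max\bU$, giving (a) and (b). For (c), note $S_{\bu,j} = S_j - u_j + Q' > -\infty$ whenever $S_j > -\infty$, and this has positive probability under the original $\bS$-law restricted to $\{E > Q'\}$ since $u_j < \infty$; a short argument rules out degeneracy. Finally, reading off the conditional law of $\bS_{\bu}$ from the displayed factorization, for a set of the form $B = \{\bu' : \bu' \le \bx - \max\bu'\}$... more cleanly: $\pr(\bS_{\bu} \le \bx \mid \bZ \nleq \bu) = \expec[\e^{-Q'}\indic(\bU - \max\bU \le \bx)]/\expec[\e^{-Q'}]$, which after rewriting $-Q' = \max\bU \cdot(-1)$... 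I would instead match term-by-term with Eq.~\eqref{ch7:eq:U2S} applied to $\bU = \bS - \bu$, where the weight is $\e^{Q}$ with $Q = \max\bU$; since here the exponential seed contributes weight $\e^{-Q'} = \e^{\max\bU} = \e^{Q}$, the two expressions coincide. This gives $\bS_{\bu}$ the asserted distribution.

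For the last sentence, suppose $u_1 = \cdots = u_D = u \ge 0$. Then $\bU = \bS - u\bone$, so $\max\bU = \max\bS - u = -u$, hence $\bU - \max\bU = \bS - u\bone + u\bone = \bS$; thus $\bS_{\bu} = \bS$ has the original distribution, and consequently $(\bZ - u\bone \mid \bZ \nleq u\bone) \sim \mgp(\bone,\bzero,\bS) \eqd \bZ$.

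The main obstacle I anticipate is not the loss-of-memory computation, which is routine, but the bookkeeping around the extended-real-valued coordinates: $\bS$ and $\bU$ take values in $[-\infty,\infty)^D$, so I must be careful that $Q' = \max\bU$ is finite a.s.\ (it is, since $\max\bU \ge U_j > -\infty$ on $\{S_j > -\infty\}$, but one should check $\max\bU < \infty$ — true because $\max\bS = 0$ gives $\max\bU = -u \le 0$ when $\bu = u\bone$, and $\le 0$ in general since $\bu \ge \bzero$... actually $\max\bU = \max(\bS - \bu) \le \max\bS = 0$ when $\bu \ge \bzero$ only if done componentwise-carefully, so $Q' \ge 0$ as noted), that the conditioning event $\{E > Q'\}$ has positive probability (it equals $\expec[\e^{-Q'}] \ge \e^{-0}\pr(Q'=0)$-type bound, positive since $Q' < \infty$ a.s.), and that property~(ii)'s nondegeneracy condition $\pr(S_{\bu,j} > -\infty) > 0$ genuinely transfers. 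I would handle these with a short paragraph of measure-theoretic housekeeping rather than letting them clutter the main line of argument, and would cite Proposition~4 of \cite{Rootzen:Segers:Wadsworth:2018b} for the general pattern.
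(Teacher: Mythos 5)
Your argument is correct, and it is worth noting that the chapter itself does not prove this proposition: it is stated as ``derived from Proposition~4 in \cite{Rootzen:Segers:Wadsworth:2018b}'', and the only trace of an intended proof is a distribution-function computation of $\pr(\bZ-\bu\le\bx\mid\bZ\nleq\bu)$ as a ratio of probabilities. Your route is different and more structural: instead of matching cumulative distribution functions, you decompose $\bZ-\bu=(E-Q')+(\bU-\max\bU)$ with $\bU=\bS-\bu$ and $Q'=-\max\bU\in[0,\max_j u_j]$, observe that the conditioning event is exactly $\{E>Q'\}$, and use independence of $E$ and $\bS$ plus the memoryless property to factorize the conditional joint law of $(E-Q',\bU)$ into a unit-exponential factor times the $\e^{-Q'}=\e^{\max\bU}$-tilted law of $\bU$. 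This verifies conditions (i)--(ii) of Definition~\ref{ch7:def:mgp} directly and, in the same stroke, identifies the law of $\bS_\bu=\bU-\max\bU$ as exactly Eq.~\eqref{ch7:eq:U2S} applied to $\bU=\bS-\bu$; the constant-threshold case then falls out immediately because $Q'=u$ is deterministic, so the tilt is trivial and $\bS_\bu=\bS$. What your approach buys is a self-contained proof that exhibits the new exponential seed and generator explicitly and explains \emph{why} the $\bU$-representation is the right bookkeeping device for conditioning; the cdf-matching route of the source reference is shorter on paper but requires already knowing the target distribution function of an MGP vector. Two small blemishes to fix: in your first display of notation you write $Q'=-\max(\bS-\bu)=\max\bU$, a sign slip (it should be $Q'=-\max\bU$, which is what you actually use throughout), and the closing ``obstacles'' paragraph rehearses the finiteness of $\max\bU$ somewhat confusedly --- the clean statements are $-\max_j u_j\le\max\bU\le 0$ (since $\max\bS=0$ and $\bzero\le\bu$), whence $\pr(E>Q')\ge\e^{-\max_j u_j}>0$ and $\expec(\e^{\max\bU})\in(0,1]$, and $\pr(S_{\bu,j}>-\infty)\ge\e^{-\max_j u_j}\pr(S_j>-\infty)/\expec(\e^{\max\bU})>0$, which disposes of the nondegeneracy requirement you left as ``a short argument''.
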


\begin{comment}
\begin{proof}[Proof of Proposition~\ref{ch7:prop:thrstable}]
\js{Complete the proof and move it to Section~\ref{ch7:sec:math}.}
For $\bx \in \R^D$ such that $\bx \not\le \bzero$, we have
\begin{align*}
    \pr(\bZ - \bu \le \bx \mid \bZ \not\le \bu)
    &= \frac{\pr(\bZ \le \bu+\bx, \bZ \not\le \bu)}{\pr(\bZ \not\le \bu}
\end{align*}
\end{proof}
\end{comment}

\begin{proposition}[Threshold stability, general]
\label{ch7:prop:generalstability}
Let $\bY \sim \mgp(\bsigma,\bevi,\bS)$. If $\bv \in [0, \infty)^D$ is such that $\sigma_j + \evi_j v_j > 0$ for all $j$, then
\[
    \rbr{\bY - \bv \mid \bY \nleq \bv} \sim \mgp\rbr{\bsigma + \bevi \bv, \bevi, \bS_{\bu}}
\]
with $u_j = \evi_j^{-1} \log (1 + \evi_j v_j / \sigma_j)$ and for $\bS_u$ as in Proposition~\ref{ch7:prop:thrstable}.
\end{proposition}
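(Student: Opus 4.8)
The plan is to treat this as a change-of-variables corollary of Proposition~\ref{ch7:prop:thrstable}, exploiting the fact that the passage from the standard to the general MGP is just a componentwise monotone transformation that preserves signs. Write $\bZ \sim \mgp(\bone,\bzero,\bS)$ for the standard MGP vector underlying $\bY$, so that $Y_j = \sigma_j(\e^{\evi_j Z_j}-1)/\evi_j$ for each $j$, with the usual limiting convention when $\evi_j = 0$. The first step is to translate the conditioning event $\{\bY \nleq \bv\}$ into an event for $\bZ$. For each $j$, the map $z \mapsto \sigma_j(\e^{\evi_j z}-1)/\evi_j$ is strictly increasing on $\R$, with inverse $y \mapsto \evi_j^{-1}\log(1+\evi_j y/\sigma_j)$, well defined precisely when $1 + \evi_j y/\sigma_j > 0$. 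The hypothesis $\sigma_j + \evi_j v_j > 0$ ensures $v_j$ lies in this domain, so $u_j := \evi_j^{-1}\log(1+\evi_j v_j/\sigma_j)$ is well defined; and because $v_j \ge 0$ one also gets $u_j \ge 0$. Monotonicity then gives $\{Y_j > v_j\} = \{Z_j > u_j\}$ for each $j$, hence $\{\bY \nleq \bv\} = \{\bZ \nleq \bu\}$ with $\bu = (u_1,\ldots,u_D)\T \in [0,\infty)^D$.

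The second step is to apply Proposition~\ref{ch7:prop:thrstable}, which is legitimate since $\bu \in [0,\infty)^D$: it yields $(\bZ - \bu \mid \bZ \nleq \bu) \sim \mgp(\bone,\bzero,\bS_{\bu})$ with $\bS_{\bu}$ defined via Eq.~\eqref{ch7:eq:U2S} from $\bU = \bS - \bu$, which matches the statement. Let $\bZ'$ denote a generic vector with this conditional law, so that, conditionally on $\bZ \nleq \bu$, we may represent $\bZ = \bu + \bZ'$. The third step is to recompute $\bY - \bv$ in terms of $\bZ'$. Using the identity $\e^{\evi_j u_j} = 1 + \evi_j v_j/\sigma_j$ coming straight from the definition of $u_j$, a one-line simplification gives, for each $j$,
\[
  Y_j - v_j
  = \frac{\sigma_j}{\evi_j}\bigl(\e^{\evi_j u_j}\e^{\evi_j Z'_j} - 1\bigr) - v_j
  = \frac{(\sigma_j + \evi_j v_j)\bigl(\e^{\evi_j Z'_j}-1\bigr)}{\evi_j}.
\]
Therefore, conditionally on $\bY \nleq \bv$, we have $\bY - \bv = (\bsigma + \bevi\bv)\,(\exp(\bevi\bZ')-1)/\bevi$ with $\bZ' \sim \mgp(\bone,\bzero,\bS_{\bu})$; since $\bsigma + \bevi\bv$ has strictly positive components by hypothesis, Definition~\ref{ch7:def:mgp:general} identifies this as $\rbr{\bY - \bv \mid \bY \nleq \bv} \sim \mgp(\bsigma + \bevi\bv, \bevi, \bS_{\bu})$, as claimed. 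Coordinates with $\evi_j = 0$ go through unchanged under the limiting conventions $\evi_j^{-1}(\e^{\evi_j z}-1) = z$ and $\evi_j^{-1}\log(1+\evi_j y/\sigma_j) = y/\sigma_j$.

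There is no substantive obstacle here: the proposition is morally the statement that composing the affine-in-exponential marginal map with a deterministic shift of $\bZ$ by $\bu$ produces another affine-in-exponential map, with the scale updated from $\bsigma$ to $\bsigma + \bevi\bv$ and the shape unchanged. The only points that need genuine care — and each is a short verification — are (a) confirming that the strictly increasing marginal maps carry the failure region $\{\bY \nleq \bv\}$ exactly onto $\{\bZ \nleq \bu\}$, together with the domain checks ensuring $\bu$ is well defined and lies in $[0,\infty)^D$ so that Proposition~\ref{ch7:prop:thrstable} is applicable, and (b) the algebraic identity in the displayed equation above. Everything else is bookkeeping.
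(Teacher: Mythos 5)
Your argument is correct: the sign-preserving componentwise monotone map turns $\{\bY\nleq\bv\}$ into $\{\bZ\nleq\bu\}$, the algebraic identity $Y_j-v_j=(\sigma_j+\evi_j v_j)(\e^{\evi_j Z_j'}-1)/\evi_j$ checks out (including the $\evi_j=0$ and $Z_j'=-\infty$ conventions), and invoking Proposition~\ref{ch7:prop:thrstable} plus Definition~\ref{ch7:def:mgp:general} finishes it. The paper gives no proof of its own (it cites Proposition~4 of \cite{Rootzen:Segers:Wadsworth:2018b}), and your reduction of the general case to the standard one via the marginal change of variables is exactly the derivation the statement's phrasing intends.
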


The lower-dimensional margins of MGP distributions are not MGP themselves: if $\bY$ is a $D$-variate MGP vector and if $J$ is a proper subset of $\{1,\ldots,D\}$, then the distribution of $\bY_J = (Y_j)_{j \in J}$ is not necessarily MGP. Even a single component is not necessarily a univariate generalized Pareto random variable: we saw this already for the standardized case in the sentences preceding Eq.~\eqref{ch7:eq:PZjx}. The reason is that, even though the whole random vector $\bY$ is guaranteed to have at least one positive component, this positive component need not always occur among the variables in $J$. However, if we condition on the event that the subvector $\bY_J$ has at least one positive component, then we obtain a generalized Pareto distribution again.

\begin{proposition}[Sub-vectors]
\label{ch7:prop:sub}
Let $\bY \sim \mgp(\bsigma, \bevi, \bS)$ and let $J \subseteq \{1,\ldots,D\}$ be non-empty. Then
\[
    \rbr{\bY_J \mid \bY_J \nleq \bzero}
    \sim \mgp(\bsigma_J, \bevi_J, \bS^{(J)})
\]
where the distribution of the $|J|$-dimensional vector $\bS^{(J)}$ is determined as in Eq.~\eqref{ch7:eq:U2S} with $\bU$ equal to $\bS_J$. If the distribution of $\bS$ itself was already determined as in Eq.~\eqref{ch7:eq:U2S} for some random vector $\bU$, then the distribution of $\bS_J$ is generated as in Eq.~\eqref{ch7:eq:U2S} with $\bU$ replaced by $\bU_J$.
%\begin{equation}
%\label{ch7:eq:SJ}
%    \pr\rbr{\bS^{(J)} \in \point}
%    = \frac{\expec[\exp(\max \bS_J) \indic \{\bS_J - \max \bS_J \in \point\}]}{\expec[\exp(\max \bS_J)]}.
%\end{equation}
\end{proposition}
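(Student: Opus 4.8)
The plan is to strip the marginal transformations, recognize the conditioned sub-vector as a peaks-over-threshold of an $E+\bU$ model at the level $\bzero$, and then verify the two defining properties of Definition~\ref{ch7:def:mgp} directly, exploiting the loss-of-memory property of the exponential. By Definition~\ref{ch7:def:mgp:general} we may write $\bY=\bsigma(\exp(\bevi\bZ)-1)/\bevi$ with $\bZ\sim\mgp(\bone,\bzero,\bS)$; this map acts coordinatewise, is strictly increasing in each coordinate, and preserves the sign of each coordinate. Hence $\cbr{\bY_J\nleq\bzero}=\cbr{\bZ_J\nleq\bzero}$, $\bY_J=\bsigma_J(\exp(\bevi_J\bZ_J)-1)/\bevi_J$, and since $\bZ_J\mapsto\bY_J$ is a deterministic bijection the conditioning commutes with it. So by Definition~\ref{ch7:def:mgp:general} in dimension $|J|$ it suffices to prove the standardized claim $\rbr{\bZ_J\mid\bZ_J\nleq\bzero}\sim\mgp(\bone,\bzero,\bS^{(J)})$ with $\bS^{(J)}$ as in Eq.~\eqref{ch7:eq:U2S} for $\bU=\bS_J$. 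Writing $\bZ=E+\bS$ as in Definition~\ref{ch7:def:mgp}, the sub-vector $\bZ_J=E+\bS_J$ has the additive structure~\eqref{ch7:eq:XEU} with $\bU:=\bS_J$: indeed $E$ is unit-exponential and independent of $\bS_J$, and $0<\expec(\e^{S_j})\le 1<\infty$ for $j\in J$ (finiteness from $S_j\le 0$, positivity from $\pr(S_j>-\infty)>0$). The one obstruction to $\bZ_J$ being standard MGP on its own is that $Q:=\max\bS_J$ is only guaranteed to be $\le 0$, not identically $0$; conditioning on $\bZ_J\nleq\bzero$, i.e.\ on $\max\bZ_J=E+Q>0$, is precisely peaks-over-threshold at level $\bzero$ for this model.

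Next I would verify Definition~\ref{ch7:def:mgp}(i)--(ii) for the conditioned vector by hand. Since $Q\le 0$ one has $\pr(\max\bZ_J>0)=\pr(E>-Q)=\expec[\e^{Q}]\in(0,1]$, so the conditioning is well defined, and $\cbr{Q=-\infty}$ is disjoint from $\cbr{\max\bZ_J>0}$, so one may restrict attention to $\cbr{Q>-\infty}$. For (i), $\pr(E+Q>t\mid E+Q>0)=\pr(E>t-Q\mid E>-Q)=\e^{-t}$ for $t\ge 0$ by loss of memory, legitimately since $t-Q\ge-Q\ge 0$. For (ii), the deviation equals $\bZ_J-\max\bZ_J=\bS_J-Q$; conditioning on $\bS_J$, using loss of memory once more, and using $\pr(E>-Q\mid\bS_J)=\e^{Q}$, gives for $t\ge 0$
\[
    \pr\!\rbr{\bS_J-Q\le\bx,\ \max\bZ_J>t \mid \max\bZ_J>0}
    = \e^{-t}\,\frac{\expec\cbr{\e^{Q}\,\indic(\bS_J\le\bx+Q)}}{\expec[\e^{Q}]}.
\]
Taking $t=0$ identifies the conditional law of $\bS_J-Q$ with Eq.~\eqref{ch7:eq:U2S} for $\bU=\bS_J$, i.e.\ with $\bS^{(J)}$, while the factorized form in $t$ simultaneously gives the independence needed in (ii) and re-derives (i). Finally $\max\bS^{(J)}=Q-Q=0$ a.s.\ and $\pr(S^{(J)}_j>-\infty)>0$ for $j\in J$, both immediate from $\pr(S_j>-\infty)>0$. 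This proves the standardized claim; applying Definition~\ref{ch7:def:mgp:general} in dimension $|J|$ then upgrades it to $\mgp(\bsigma_J,\bevi_J,\bS^{(J)})$.

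For the final sentence I would telescope the exponential tilts, showing that applying Eq.~\eqref{ch7:eq:U2S} first with generator $\bU$ and then, after restriction to the coordinates in $J$, once more with generator $\bS_J$, produces the same law as a single application with generator $\bU_J$. Concretely, if $\bS$ is obtained from $\bU$ via Eq.~\eqref{ch7:eq:U2S}, then under the tilted measure $\propto\e^{\max\bU}\,\diff\pr$ one has $\bS_J\eqd\bU_J-\max\bU$, hence $\max\bS_J\eqd\max\bU_J-\max\bU$; substituting these into Eq.~\eqref{ch7:eq:U2S} with the generator taken to be $\bS_J$, the two tilting weights $\e^{\max\bU}\cdot\e^{\max\bU_J-\max\bU}=\e^{\max\bU_J}$ collapse, leaving precisely Eq.~\eqref{ch7:eq:U2S} with $\bU_J$ in place of $\bU$. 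As a consistency check, applying Eq.~\eqref{ch7:eq:U2S} to an already-admissible spectral vector — one whose maximum is $0$ — returns that vector unchanged, so the composed and the direct recipes agree.

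The step I expect to be the main obstacle is the bookkeeping in the verification of (ii): correctly handling coordinates equal to $-\infty$ so that the null part $\cbr{Q=-\infty}$ of the conditioning event can be discarded, making sure every conditional expectation is well defined, and — most importantly — checking that the identity is \emph{exact} rather than merely asymptotic as in the display preceding Eq.~\eqref{ch7:eq:U2S}. That exactness hinges on the peaks-over-threshold level being $\bzero$ together with $\max\bS_J\le 0$, which is exactly what keeps all the positive-part truncations inside the exponential tail inactive.
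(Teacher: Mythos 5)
Your argument is correct. The paper itself states Proposition~\ref{ch7:prop:sub} without proof (deferring to the cited reference), but your route --- reduce to the standardized case via the sign-preserving marginal map, recognize $\bZ_J=E+\bS_J$ as an instance of the $E+\bU$ construction in Eq.~\eqref{ch7:eq:XEU} with $Q=\max\bS_J\le 0$, and verify Definition~\ref{ch7:def:mgp}(i)--(ii) exactly (not just asymptotically) via loss of memory, with the factorized display identifying $\bS^{(J)}$ through Eq.~\eqref{ch7:eq:U2S} --- is precisely the intended mechanism, and your telescoping of the exponential tilts for the final sentence ($\e^{\max\bU}\cdot\e^{\max\bU_J-\max\bU}=\e^{\max\bU_J}$, with the normalizations cancelling) is the right computation. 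The only caveat is interpretive rather than mathematical: the proposition's closing sentence literally says ``the distribution of $\bS_J$,'' whereas what your (correct) computation establishes is that $\bS^{(J)}$ is generated by $\bU_J$; you have read the statement in the only way that makes it true.
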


Notably, the $J$-marginal of the MGP distribution generated by $\bS$ is \emph{not} generated by the $J$-marginal of $\bS$. Some additional transformation, passing by Eq.~\eqref{ch7:eq:U2S} is needed. However, in the latter $\bU$-representation, taking lower-dimensional margins is as simple as taking lower-dimensional margins of $\bU$. This is one of the advantages of the $\bU$-representation.

In case $J$ is a singleton, $J = \{j\}$, Proposition~\ref{ch7:prop:sub} states that $Y_j$ given $Y_j > 0$ is a univariate generalized Pareto random variable. We saw this already in the case $\sigma_j = 1$ and $\xi_j = 0$ in Eq.~\eqref{ch7:eq:PZjx}

The family of MGP distributions satisfies a certain stability property under linear transformations by matrices with positive coefficients, provided the shape parameters $\evi_j$ of all $D$ components are the same. Recall that the components of an MGP vector $\bY$ can be $-\infty$ with positive probability; in a linear transformation as in $\bA \bY$ for an $m \times D$ matrix $\bA$, the convention is that $0 \cdot (-\infty) = 0$. 
%\pn{$m \times d$ should be $m \times D$ here and below? Personally, I could even prefer to use $d$ instead of $D$ everywhere as capital letters  usually denote random variables.}
%\js{I prefer $d$ too, but according to the guidelines, we are supposed to use $D$. The symbol $d$ is reserved for the dimension of the spatial domain in spatial statistics.}

\begin{proposition}[Linear transformations]
\label{ch7:prop:SA}
Let $\bY \sim \mgp(\bsigma, \evi \bone, \bS)$ and let $\bA = (a_{i,j})_{i,j} \in [0, \infty)^{m \times D}$ be such that $\pr(\sum_{j=1}^D a_{i,j} Y_j > 0) > 0$ for all $i = 1,\ldots,m$. Then
\[
    \rbr{\bA \bY \mid \bA \bY \nleq \bzero}
    \sim \mgp(\bA \bsigma, \evi \bone, \bS_{\bsigma,\evi,\bA})
\]
where the distribution of $\bS_{\bsigma,\evi,\bA}$ is given by Eq.~\eqref{ch7:eq:U2S} for some random vector $\bU$ whose distribution depends on $\bS,\bsigma,\evi,\bA$.
%as detailed in \cite[Proposition~20]{Rootzen:Segers:Wadsworth:2018}.
%\js{The formula is a bit complicated, one needs to pass by $\bU$.}
%\pn{It is not clear to me why to denote $\bS_{\bsigma,\evi,\bA}$ instead of the simpler $\bS_{\bA}$? see also Prop 1.2.2 and 1.2.3 ($\bS_u$ and $S_J$ but not $\bS_{\bsigma,\bevi,\bu}$ and $S_{\bsigma,\bevi,J}$?}
%\js{Because in the earlier propositions, the transformation of $\bS$ only involved $\bu$ and $J$, respectively, but this time, it also involves $\bsigma$ and $\gamma$. The transformation formula is rather complicated and passes by $\bU$, see Eq.~(51) in \cite{Rootzen:Segers:Wadsworth:2018}. Perhaps we can only give the univariate case below and refer to the paper for the multivariate case? That's much simpler.}
\end{proposition}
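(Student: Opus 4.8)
The plan is to absorb the common shape parameter $\evi$ into a marginal transformation so as to reduce to the standard case, and then to recognise the conditioned linear image as a $\bU$-generated standard MGP distribution through Eq.~\eqref{ch7:eq:U2S}. Write $\bZ = E + \bS \sim \mgp(\bone,\bzero,\bS)$ as in Definition~\ref{ch7:def:mgp} and $\bY = \bsigma(\e^{\evi\bZ}-1)/\evi$ as in Definition~\ref{ch7:def:mgp:general}, all operations componentwise, and put $\rho_i = (\bA\bsigma)_i = \sum_j a_{ij}\sigma_j$. First note $\rho_i > 0$ for every $i$: if $\rho_i = 0$ then, since all $\sigma_j > 0$ and $a_{ij}\ge 0$, every $a_{ij}$ vanishes, so $(\bA\bY)_i \equiv 0$, contradicting $\pr((\bA\bY)_i > 0) > 0$. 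With the convex weights $w_{ij} = a_{ij}\sigma_j/\rho_i \ge 0$, $\sum_j w_{ij} = 1$, substitution of $Y_j = \sigma_j(\e^{\evi Z_j}-1)/\evi$ and $Z_j = E + S_j$ yields
\[
    (\bA\bY)_i = \rho_i\,\frac{\e^{\evi Z_i'}-1}{\evi}, \qquad
    Z_i' := \evi^{-1}\log\Bigl(\sum\nolimits_j w_{ij}\,\e^{\evi Z_j}\Bigr) = E + V_i, \qquad
    V_i := \evi^{-1}\log\Bigl(\sum\nolimits_j w_{ij}\,\e^{\evi S_j}\Bigr),
\]
where for $\evi = 0$ the usual conventions turn $V_i$ into $\sum_j w_{ij}S_j$. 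The point is that, because the $w_{ij}$ sum to one, this expresses the transformed exponentiated margin $\e^{\evi Z_i'}$ as a convex combination of the original ones $\e^{\evi Z_j}$, which is exactly where the equal-shape hypothesis is used.

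Because each $S_j \le 0$, the argument of the logarithm is a convex combination lying in $[0,1]$ when $\evi \ge 0$ and in $[1,\infty]$ when $\evi < 0$; combined with the monotonicity of $x \mapsto x^{1/\evi}$ and the conventions for $\pm\infty$, this shows $V_i \le 0$ in every case, hence $\expec(\e^{V_i}) \le 1 < \infty$. Moreover $\{(\bA\bY)_i > 0\}$ is, up to a null set, contained in $\{V_i > -\infty\}$: on that event $(\bA\bY)_i$ is finite and nonzero, hence so is $Z_i'$ by sign preservation, hence so is $V_i = Z_i' - E$; therefore $\pr(V_i > -\infty) \ge \pr((\bA\bY)_i > 0) > 0$. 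Thus $\bV = (V_1,\ldots,V_m)\T$ is independent of $E$ (it is a function of $\bS$ alone), has $\max\bV \le 0$, and satisfies $0 < \expec(\e^{V_i}) < \infty$ for all $i$, so $\bZ' = (Z_1',\ldots,Z_m')\T = E + \bV$ is exactly of the form \eqref{ch7:eq:XEU}. Since $\max\bV \le 0$, the lack-of-memory property of $E$ makes the relation behind \eqref{ch7:eq:XEU}--\eqref{ch7:eq:U2S} exact rather than merely asymptotic here: conditionally on $\max\bZ' = E + \max\bV > 0$, the variable $\max\bZ'$ is again unit-exponential and independent of $\bZ' - \max\bZ'$, whose law is the one prescribed by \eqref{ch7:eq:U2S} with $\bU$ replaced by $\bV$; denoting it $\bS_{\bsigma,\evi,\bA}$ (which manifestly depends only on $\bS,\bsigma,\evi,\bA$), we obtain $(\bZ' \mid \bZ' \nleq \bzero) \sim \mgp(\bone,\bzero,\bS_{\bsigma,\evi,\bA})$. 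This is the step that invokes Proposition~4 of \cite{Rootzen:Segers:Wadsworth:2018b}, i.e.\ the material surrounding Eq.~\eqref{ch7:eq:U2S}.

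Finally I would transfer back. Because $\rho_i > 0$ and the map $z \mapsto \sigma(\e^{\evi z}-1)/\evi$ preserves the sign of $z$, we have $(\bA\bY)_i > 0 \iff Z_i' > 0$ for each $i$, hence $\{\bA\bY \nleq \bzero\} = \{\bZ' \nleq \bzero\}$; and the displayed identity expresses $\bA\bY$ as the marginal transformation of Definition~\ref{ch7:def:mgp:general}, with scale $\bA\bsigma$ and shape $\evi\bone$, applied to $\bZ'$. Applying the conditioning to both sides of that identity therefore gives $(\bA\bY \mid \bA\bY \nleq \bzero) \sim \mgp(\bA\bsigma,\evi\bone,\bS_{\bsigma,\evi,\bA})$, as claimed. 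I expect the main obstacle to be precisely the case $\evi \neq 0$: a linear combination normally destroys the generalized-Pareto shape, and seeing that it is rescued here — through the convex-combination identity $\e^{\evi Z_i'} = \sum_j w_{ij}\e^{\evi Z_j}$ — together with the bookkeeping of the $\pm\infty$ conventions (in particular $\e^{\evi S_j} = +\infty$ when $\evi < 0$ and $S_j = -\infty$) needed to confirm $V_i \le 0$ and $0 < \expec(\e^{V_i}) < \infty$, is the delicate part; once $\bZ' = E + \bV$ is in place, the rest is the already-established machinery of \eqref{ch7:eq:U2S}.
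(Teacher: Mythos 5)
Your proof is correct. The paper does not actually include its own proof of this proposition (it is stated as derived from Proposition~4 of \cite{Rootzen:Segers:Wadsworth:2018b}), but your route --- the convex-combination identity $\e^{\evi Z_i'} = \sum_j w_{ij}\e^{\evi Z_j}$, which rewrites $\bA\bY$ as the marginal transform of $\bZ' = E + \bV$ with $\bV \le \bzero$, followed by the exact lack-of-memory version of Eq.~\eqref{ch7:eq:U2S} --- is precisely the intended mechanism, as the statement itself signals by prescribing $\bS_{\bsigma,\evi,\bA}$ through Eq.~\eqref{ch7:eq:U2S}.
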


Proposition~\ref{ch7:prop:sub} with $\evi_1 = \ldots = \evi_D$ is a special case of Proposition~\ref{ch7:prop:SA} by an appropriate choice of $\bA$.
A remarkable consequence of Proposition~\ref{ch7:prop:SA} is that, for coefficient vectors $\ba \in [0, \infty)^d$ such that $\pr\rbr{a_1 Y_1 + \cdots + a_D Y_D > 0} > 0$, we have
\[
    \rbr{a_1 Y_1 + \cdots + a_D Y_D \mid a_1 Y_1 + \cdots + a_D Y_D > 0}
    \sim \gp(a_1 \sigma_1 + \cdots + a_D \sigma_D, \evi),
\]
a univariate generalized Pareto distribution whose parameters do not depend on the dependence structure of $\bY$.

%In this terminology, Eq.~\eqref{ch7:eq:Su} says that $\bS - \bu$ is a $\bU$-generator of the mgp distribution of $\bZ - \bu$ given $\bZ \nleq \bu$. \js{To be explained: $\bS_J \ne \bS^{(J)}$.} In the same way, Eq.~\eqref{ch7:eq:SJ} says that $\bS_J$ is an $\bU$-generator of the mgp distribution of $\bY_J$ given $\bY_J \nleq \bzero$ \pn{I wonder if the index $J$ is necessary here to make the comparison as $J=\{1,\dots,D\}$.} \js{In \eqref{ch7:eq:SJ}, $J$ is a subset of $\{1,\ldots,D\}$, so we do not necessarily have $J = \{1,\ldots,D\}$. If $J \ne \{1,\ldots,D\}$, the vector $\bS_J$ does in general not satisfy $\max(\bS_J) = 0$, so that it is not an ``$\bS$-generator'' in dimension $|J|$. Still, by \eqref{ch7:eq:SJ}, it serves as the ``$\bU$-generator'' of the $|J|$-variate mgp distribution of $\bZ_J \mid \bZ_ \not\le \bzero$.}
%\pn{I will also add the remark: Although similar in terms of their expressions,  \eqref{ch7:eq:Z additive model} and \eqref{ch7:eq:XEU} are fundamentally different as the constraint $\max(\bS)=0$  is different from the constraint $0 < \expec[\exp(U_j)] < \infty$ for all $j = 1,\ldots,D$. This leads to different parametric families.}

\paragraph{Densities}

Calculation of failure probabilities or likelihood-based inference requires formulas for MGP densities. The density of the general case can be easily found in terms of the one in the standard case: the density $p_{\bY}$ of $\bY \sim \mgp(\bsigma, \bevi, \bS)$ in Eq.~\eqref{ch7:eq:Z2Y} can be recovered from the density $p_{\bZ}$ of $\bZ \sim \mgp(\bone, \bzero, \bS)$ by
\begin{equation}
\label{ch7:eq:pZ2pY}
    p_{\bY}(\by) = p_{\bZ}\rbr{\bevi^{-1} \log(1 + \bevi \by / \bsigma)} \prod_{j=1}^D \frac{1}{\sigma_j + \evi_j y_j},
\end{equation}
for $\by \in \R^D$ such that $\by \nleq \bzero$ and $\sigma_j + \evi_j y_j > 0$ for all $j = 1,\ldots,D$.
Here, it is assumed that $\bY$ and $\bZ$ are real-valued, that is, $\pr(S_j = -\infty) = 0$ for all $j = 1,\ldots,D$. 
%Implicitly, this means at all components are somewhat large simultaneously. \js{To be explained better? The complementary case is the PhD subject of Anas Mourahib.}
The extension to the case where $S_j$ can be $-\infty$ with positive probability is explored in \cite{mourahib2024multivariate}.

In view of Eq.~\eqref{ch7:eq:pZ2pY}, it is thus sufficient to study the density of $\bZ \sim \mgp(\bone, \bzero, \bS)$.
Let $\bz \in \R^D$ be such that $\bz \not\leq \bzero$.  Then for $\bS$ generated by $\bT$ as in Eq.~\eqref{ch7:eq:T2S}, the density of $\bZ$ is
\begin{equation}
\label{ch7:eq:mgppdfT}
    p_{\bZ}(\bz) = \frac{1}{\e^{\max \bz}} \int_{-\infty}^{\infty} p_{\bT}(\bz + t) \, \diff t,
\end{equation}
\chg{with $p_{\bT}$ the density of $\bT$.}
In contrast, for $\bS$ generated by $\bU$ as in \eqref{ch7:eq:U2S}, we have
\begin{equation}
\label{ch7:eq:mgppdfU}
    p_{\bZ}(\bz) = \frac{1}{\expec(\e^{\max \bU})} \int_{-\infty}^{\infty} p_{\bU}(\bz + t) \, \e^t \, \diff t,
\end{equation}
\chg{where $p_{\bU}$ is the density of $\bU$.}
For certain distributions of $\bT$ and $\bU$, these integrals can be calculated explicitly, leading to manageable analytic forms for MGP densities; see Section~\ref{ch7:sec:par}. The right-hand sides in Equations~\eqref{ch7:eq:mgppdfT} and~\eqref{ch7:eq:mgppdfU} are similar but different, underlining the different roles of $\bT$ and $\bU$ in the diagram~\eqref{ch7:eq:TUSZY}.

\paragraph{Summary}

In our study of MGP distributions, we have covered a lot of ground already. Table~\ref{ch7:tab:CheatSheet} provides an overview of the various representations and properties. To add some perspective, we have put the MGP distribution in parallel with the multivariate normal distribution.

The last line in Table~\ref{ch7:tab:CheatSheet} deserves some comment. Except for the case of perfect correlation, joint extremes of the multivariate normal distribution feature asymptotic independence: if $\bY \sim \Norm(\bmu, \bSigma)$ and if the correlation between components $Y_i$ and $Y_j$ is not equal to one, then always
%the tail dependence coefficient of $Y_i$ and $Y_j$ is equal to
\begin{equation}
\label{ch7:eq:YijNorm}
%    \chi_{ij}
    \lim_{z \to \infty} \frac{\pr \rbr{Y_i > \mu_i + \sigma_i z \text{ and } Y_j > \mu_j + \sigma_j z}}{\pr \rbr{Y_i > \mu_i + \sigma_i z}}
    = 0.
\end{equation}
The probability that $Y_i$ and $Y_j$ both exceed a high critical value is of smaller order than the probability that they do so individually.
In contrast, if $\bY \sim \mgp(\bsigma, \bevi, \bS)$, then, except in the boundary case of asymptotic independence where $\pr(S_i = -\infty \text{ or } S_j = -\infty) = 1$, we have
\begin{equation}
\label{ch7:eq:YijMGP}
%    \chi_{ij}
    \lim_{z \to \infty}
    \frac{1}{\e^{-z}} \pr \rbr{ Y_i > \sigma_i \frac{\e^{\evi_i z} - 1}{\evi_i} \text{ and } Y_j > \sigma_j \frac{\e^{\evi_j z} - 1}{\evi_j}}
%    = \expec 
    > 0.
\end{equation}
Joint excesses over high levels thus occur with probabilities that are comparable to those of the corresponding univariate events. The difference between the two situations is fundamental and explains why, to model extremes of multivariate normal random vectors, a different framework is needed, such as the one developed in Chapter~\verb+\ref{ch:cond}+.

%\pn{I thought that a table comparing the Gaussian and the MGPD was a nice way to summarize most properties of the MGPD} \js{Great idea to summarize the properties of the mgp family in a table. Not entirely sure though about the comparison with the Gaussian family.}
%\pn{As you like, I thought adding the Gaussian for two reasons. First, as a selling argument: MGPD are not yet very much used in practice because practitioners imagine that only the Gaussian are nice properties in a multivariate context. Second, (and more importantly), expressions like $\Sigma_{2|1}$ and $\mu_{2|1}$ may be used in other chapters. So, other authors could referred to the table.}
%\js{OK.}

\newcolumntype{R}{>{\raggedright\arraybackslash}X}%

\begin{table}[ht]\footnotesize
    \centering
    \begin{tabularx}{\textwidth}{>{\raggedright}p{17ex}|>{\raggedright}p{.3\textwidth}|R}
    \toprule
        & Gaussian $\bY \sim \Norm(\bmu, \bSigma)$ 
        & MGP $\bY \sim \mgp(\bsigma,\bevi,\bS)$ \\
    \midrule\midrule
    \em Parameters 
        & $\bmu$: location
        & $\bsigma$: scale \\ 
        & $\bSigma$: covariance matrix 
        & $\bevi$: shape, extreme value index \\
        &
        & $\bS$: dependence
    \\ \midrule
    \em Definition, generation 
        & $\bY = \bmu + \bGamma \bZ$ with $\bGamma \bGamma\T=\bSigma$ and $\bZ \sim \Norm(\bzero, \boldsymbol{I})$ 
        & $\bY = \bsigma \{\exp(\bevi \bZ) - 1\}/\bevi$ with $\bZ \sim \mgp(\bone, \bzero, \bS)$; further, $\bZ = E + \bS$ with $E \perp \bS$, $E \sim \operatorname{Exp}(1)$ and $\max \bS = 0$, generated by $\bT$ or $\bU$ [diagram~\eqref{ch7:eq:TUSZY}]
    \\ \midrule
    \em Support 
        & $\R^D$ or linear subspace thereof 
        & contained in $\LL = \{ \bx : \bx \nleq \bzero \}$ 
    \\ \midrule
    \em Margins 
        & $\bY_J \sim \Norm(\bmu_{J}, \bSigma_{J})$ 
        & $\rbr{\bY_J \mid \bY_J \nleq \bzero} \sim \mgp(\bsigma_J, \bevi_J, \bS^{(J)}) $ (Proposition~\ref{ch7:prop:sub})
    \\ \midrule
    \em Density (if exists)
        & $p_{\bY}(\by) = |\bSigma|^{-D/2} p_{\bZ}(\bz)$ with $\bZ \sim \Norm(\bzero, \boldsymbol{I})$ and $\bz = \bSigma^{-1/2} (\by - \bmu)$ 
        & $p_{\bY}(\by) = p_{\bZ}(\bz) \prod_{j=1}^d \frac{1}{\sigma_j + \evi_j y_j}$ with $\bz = \frac{1}{\bevi} \log(1 + \bevi \by / \bsigma)$ and $p_{\bZ}(\bz)$ from $p_{\bT}$ or $p_{\bU}$ in \eqref{ch7:eq:mgppdfT}--\eqref{ch7:eq:mgppdfU} 
%        & $p_{\bY}(\by) = p_{\bZ}\rbr{ \frac{\log(1 + \bevi \by / \bsigma)}{\bevi} } \prod_{j=1}^d \frac{1}{\sigma_j + \evi_j y_j}$ and $p_{\bZ}(\bz)$ in Eqs.~\eqref{ch7:eq:mgppdfT}--\eqref{ch7:eq:mgppdfU} 
    \\ \midrule
    \em Stability 
        & sum-stability 
        & threshold-stability (Proposition~\ref{ch7:prop:generalstability})
    \\ \midrule
    \em Linear transformations 
        & $\bA \bY \sim \Norm(\bA \bmu, \bA\bSigma\bA\T)$ for matrix $\bA$ of reals 
        & if $\evi_j = \evi$, then $\rbr{\bA \bY \mid \bA \bY \nleq \bzero} \sim \mgp(\bA \bsigma, \evi \bone, \bS_{\bsigma,\evi,\bA})$ for matrix $\bA$ of nonnegative reals (Proposition~\ref{ch7:prop:SA})
    \\ \midrule
    \em Conditioning
        & $(\bY_2 \mid \bY_1=\by_1) \sim \Norm(\bmu_{2|1}, \bSigma_{2|1})$ 
        %with $\mu_{2|1}=\mu_2-\Sigma_{21} \Sigma^{-1}_{11}(y_1-\mu_1) and $\Sigma_{2|1}=\Sigma_{22}-\Sigma_{21} \Sigma^{-1}_{11}\Sigma_{12}$ 
        & $(\bY - \bv \mid \bY \nleq \bv) \sim \mgp\rbr{\bsigma + \bevi \bv, \bevi, \bS_{\bu}}$ (Proposition~\ref{ch7:prop:generalstability})
    \\ \midrule
    \em Dependence coefficient 
        & linear correlation $\rho$  
        & $\chi=\expec \left[ \min \cbr{ \frac{e^{S_1}}{\expec(e^{S_1})}, \frac{e^{S_2}}{\expec(e^{S_2})}  }\right]$
    \\ \midrule
    \em Tail dependence
        & asymptotic independence \eqref{ch7:eq:YijNorm}
        & asymptotic dependence \eqref{ch7:eq:YijMGP}
    \\ \bottomrule
\end{tabularx}
    \caption{Cheat-sheet comparing properties of Gaussian and MGP distributions.} 
    % Caution on the conditioning events, which are different: $\bY_1=\by_1$ in the Gaussian case, and $\bY \nleq \bv$ in the MGPD case.
    \label{ch7:tab:CheatSheet}
\end{table}

\section{Exponent Measures, Point Processes, and More}
\label{ch7:sec:ppp}
\begin{comment}
- intensity measure Lambda
- use in Poisson-type point process limits: picture (large sample, failure event B to infinity via B+log(n), Poisson equilibrium)
- dependence functions: exponent measure function V, stable tail dependence function \ell, Pickands dependence function A
- angular measure
\end{comment}

%\js{Il y a beaucoup de matière à couvrir et les concepts ne sont pas forcément faciles. Il y a sans doute des choix à faire. Ici j'ai mis de la théorie en vrac, des brics à partir desquels on pourra construire l'histoire.}
%\pn{Grâce à toi, je trouve que l'histoire est déjà bien structurée. C'est surtout quelques exemples qui manquent pour l'illustrer :-)}
%\js{Je chercher à expliquer les choses intuitivement. Les exemples avec des vrais données, ce n'est pas mon dada ;-)}

\paragraph{The law of small numbers}

It would have been great if dependence between multivariate extremes could be captured by an object as simple as the correlation matrix of a multivariate normal distribution. As is clear from Section~\ref{ch7:sec:mgp}, things are not that easy. The random vector $\bS$ in Definition~\ref{ch7:def:mgp} describes tail dependence as arising from the individual deviations $S_1,\ldots,S_D$ to a common shock $E$ affecting the whole vector. The additive structure $\bZ = E + \bS$ of the standard MGP distribution can be understood as a random mechanism generating multivariate extremes. However, to understand more advanced models in multivariate extreme value analysis, it is important to grasp another, equivalent object, the \emph{exponent measure}. It is a fundamental notion in multivariate extreme value theory  as it provides the bridge between various concepts and distributions \cite{beirlant:goegebeur:teugels:segers:2004,dehaan:ferreira:2006,resnick:2008,Falk11}.

Suppose you participate in a lottery with a probability of success equal to one in one million ($p = 10^{-6}$), surely a rare event. If you would live long enough to bet at one million different draws (sample size $n = 10^6$), then you could expect to win once ($\lambda = np = 1$), while the number of times you would win the jackpot would be approximately Poisson distributed with parameter $\lambda = 1$: the probability of winning exactly $k$ times, for $k = 0, 1, 2, \ldots$, would be approximately $\e^{-\lambda} \lambda^k / (k!)$. This phenomenon, where a small probability of success is compensated by a large number of trials, is called the \emph{law of small numbers} and underpins much of extreme value theory.

In multivariate extremes, the rare event of interest is not to win the lottery but consists of a risk region of dimension $D$ and may take many shapes. The exponent measure provides the link between the risk region and the Poisson parameter counting the number of points in a large sample that hit the risk region.
The exponent measure associates to multivariate risk regions $B$ a nonnegative number. This number is not a probability nor a density but an \emph{intensity}: it indicates how many points in a large sample can be expected on average to fall in $B$. As the sample size becomes large, there are more candidate observations that can potentially hit $B$. To offset this effect, the set $B$ is pushed away to ever more extreme regions, diminishing the probability of an individual sample point to hit $B$. The two effects are calibrated to counterbalance each other and to reach an equilibrium through the law of small numbers. In the lottery example above, imagine that the number of draws $n$ is further increased but that at the same time, the winning probability $p$ is diminished. As long as the equilibrium $np \to \lambda$ is preserved, the Poisson distribution will emerge eventually.
%According to the \emph{law of small numbers}, the number of successes after $n$ Bernoulli trials with probability of success $p$ will be asymptotically Poisson distribution of $n$ goes to infinity and $p$ goes to zero in such a way that the expected number of successes, $np$, converges to a positive number $\lambda$.\footnote{}

\paragraph{Exponent measure on unit-exponential scale}

%The exponent measure is a fundamental notion in multivariate extreme value theory as it provides the bridge between various concepts and distributions.
To introduce the exponent measure formally, we first consider the univariate case.
Let $E$ be a unit-exponential variable 
%\pn{To be notations consistent, I will use $E$ instead of $X$} \js{OK}
and let $B$ be a subset of the real line with a finite lower bound. For $t$ sufficiently large such that the set $B + t = \{ x + t : x \in B \}$ is contained in $[0, \infty)$, we have, by a change of variables,
\begin{equation}
\label{ch7:eq:urvexp}
    \pr(E \in B + t)
    = \int_{B + t} \e^{-x} \, \diff x
    = \e^{-t} \int_{B} \e^{-x} \, \diff x
    = \e^{-t} \Lambda(B),
\end{equation}
where $\Lambda$ is a measure on $\R$ with density $\lambda(x) = \e^{-x}$ for $x \in \R$: each subset $B$ of $\R$ is mapped to $\Lambda(B) = \int_{B} \e^{-x} \, \diff x$. According to Eq.~\eqref{ch7:eq:urvexp}, the failure probability $\pr(E \in B + t)$ decays as $\e^{-t}$ as $t$ grows, while the proportionality constant is $\Lambda(B)$. The measure $\Lambda$ has two notable properties:
\begin{enumerate}
\item[(i)] it is normalized: $\Lambda([0,\infty)) = 1$;
\item[(ii)] a homogeneity property: $\Lambda(B + t) = \e^{-t} \Lambda(B)$ for $t \in \R$.
\end{enumerate}
Still, the measure $\Lambda$ is \emph{not} a probability measure. In fact, its total mass is infinity, $\Lambda(\R) = +\infty$: indeed, for real $u$, we have $\Lambda([u,\infty)) = \e^{-u}$, and this goes to infinity as $u$ decreases to $-\infty$.

Next, we move to the multivariate case. Let $\bE = (E_1,\ldots,E_D)\T$ 
%\pn{To be notations consistent, I will use ${\bf E}$ instead of $\bX$} \js{OK}
be a random vector whose $D$ components are all unit-exponential but not necessarily independent.\footnote{The requirement that the margins of $\bE$ are unit-exponential is not essential and we could also assume that $\bE$ is as in Equation~\eqref{ch7:eq:XEU}.} Then, similarly as above, one can investigate the failure probability $\pr(\bE \in B + t)$ as $t$ grows large. It turns out that, in many cases, there exists $0 < \Lambda(B) < \infty$ such that
\begin{equation}
\label{ch7:eq:mrvexp}
    \pr(\bE \in B + t) \sim \e^{-t} \Lambda(B),
    \qquad t \to \infty,
\end{equation}
at least for sets $B \subset [-\infty, \infty)^D$ whose boundary is not too rough and that are bounded from below in our multivariate peaks-over-threshold sense, i.e., there exists $\bu \in \R^D$ such that all $\bx \in B$ satisfy $\bx \not\le \bu$. The symbol $\sim$ in Eq.~\eqref{ch7:eq:mrvexp} means that the ratio of the left and right-hand sides tends to $1$, at least for sets~$B$ such that $0 < \Lambda(B) < \infty$. As in Eq.~\eqref{ch7:eq:urvexp}, the failure probability in \eqref{ch7:eq:mrvexp} decays at rate $\e^{-t}$ and the (asymptotic) proportionality constant depends on $B$ through the factor $\Lambda(B)$. 

Formally, the map $\Lambda$ that associates to set $B$ the proportionality constant $\Lambda(B)$ is a \emph{measure}, i.e., a map that assigns nonnegative numbers to subsets according to certain rules. The measure $\Lambda$ that appears in Eq.~\eqref{ch7:eq:mrvexp} is called an exponent measure for reasons that will become clear in Section~\ref{ch7:sec:mev} when we define multivariate extreme value distributions. In the same way that the individual variables $S_j$ in Definition~\ref{ch7:def:mgp} could hit $-\infty$ with positive probability, the exponent measure $\Lambda$ is defined on the space $[-\infty,\infty)^D \setminus \{-\binfty\}$ of vectors $\bx = (x_1,\ldots,x_D)\T$ with $x_j \in \R \cup \{-\infty\}$ for all $j$ and such that $x_j$ is real-valued (not $-\infty$) for at least one $j$. We say that a subset $B$ of $[-\infty,\infty)^D \setminus \{-\binfty\}$ is bounded away from $-\infty$ if there exists a real $u$ such that all $\bx \in B$ satisfy $\max(x_1,\ldots,x_D) > u$, that is, $\bx$ exceeds the threshold $u \bone$.
As in the univariate case, the exponent measure is normalized in a certain way and is homogeneous.

\begin{definition}
\label{ch7:def:Lam}
Let $\Lambda$ be a measure on $[-\infty,\infty)^D \setminus \{ -\binfty \}$ such that $\Lambda(B)$ is finite whenever $B$ is bounded away from $-\binfty$. Then $\Lambda$ is an \emph{exponent measure} on unit-exponential scale if it satisfies the following two conditions:
\begin{align}
\label{ch7:eq:Lamnorm}
    &\Lambda(\{ \bx : x_j \ge 0 \}) = 1, && j = 1,\ldots,D; \\
\label{ch7:eq:Lamhomo}
    &\Lambda(B + t) = \e^{-t} \Lambda(B), && t \in \R, \; B \subset [-\infty,\infty)^D \setminus \{ -\binfty \}.
\end{align}
\end{definition}

The phrase ``unit-exponential scale'' concerns the identity 
\[ 
    \Lambda(\{ \bx : x_j > u \}) = \e^{-u},
    \qquad j = 1,\ldots,D, \; u \in \R.
\]
Confusingly, perhaps, the name ``exponent measure'' does \emph{not} come from the use of this unit-exponential scale but rather from the appearance of $\Lambda$ in the exponent of the formula for an multivariate extreme value distribution, see Definition~\ref{ch7:def:mev} below.

\paragraph{Point processes}

To see how \chg{the exponent measure $\Lambda$ permits modeling extremes of large samples}, let $\bE_1,\ldots,\bE_n$ be an independent random sample from the distribution of the random vector $\bE$ in Eq.~\eqref{ch7:eq:mrvexp}. Introduce the counting variable
\begin{equation}
\label{ch7:eq:Nn}
    N_n(B) = \sum_{i=1}^n \indic ( \bE_i \in B + \log n ),
\end{equation}
\chg{where $\indic(A)$ denotes the indicator function of the event $A$, equal to $1$ if the event occurs and $0$ otherwise. In \eqref{ch7:eq:Nn}, $N_n(B)$ counts} the number of sample points $\bE_1,\ldots,\bE_n$ in the failure set $B + \log n$. As $n$ grows, there are two opposing effects affecting the distribution of $N_n(B)$: on the one hand, the risk region $B + \log n$ escapes to $+\binfty$, while on the other hand, the number of sample points grows; see Figure~\ref{ch7:fig:Blogn}. The distribution of $N_n(B)$ is Binomial with parameters $n$, the number of ``attempts'', and $\pr(\bE \in B + \log n)$, the probability of ``success''. The translation by $\log n$ is chosen in such a way that the expected number of points in the failure set stabilizes: by Eq.~\eqref{ch7:eq:mrvexp}, we have
\begin{equation}
\label{ch7:eq:ENnLam}
    \expec\cbr{ N_n(B) } = n \pr(\bE \in B + \log n)
    \to \Lambda(B), \qquad n \to \infty.
\end{equation}
By the law of small numbers, the limit distribution of $N_n(B)$ is Poisson with expectation $\Lambda(B)$, provided $0 < \Lambda(B) < \infty$, that is,
\begin{equation}
\label{ch7:eq:toPoisson}
    \lim_{n \to \infty} \pr\cbr{ N_n(B) = k } 
    = \e^{-\Lambda(B)} \frac{\Lambda(B)^k}{k!}, 
    \qquad k = 0, 1, 2, \ldots
\end{equation}
Furthermore, for disjoint sets $B_1$ and $B_2$, the random variables $N_n(B_1)$ and $N_n(B_2)$ become independent as $n \to \infty$.

Together, these facts imply that the point processes $N_n$ converge in distribution to a \emph{Poisson point process} $N$ with \emph{intensity measure} $\Lambda$. The formal theory goes beyond the scope of this chapter. Intuitively, think of $N$ as the joint distribution of a cloud of infinitely many random points $\bX_i$ encoded as a random counting measure $N(B) = \sum_i \indic(\bX_i \in B)$: 
\begin{itemize}
\item
    for each region $B$ such that $\Lambda(B)$ is finite, the number of points $N(B)$ in $B$ is Poisson distributed with parameter $\expec\cbr{N(B)} = \Lambda(B)$;
\item
    for disjoint sets $B_1,\ldots,B_k$, the counting variables $N(B_1),\ldots,N(B_k)$ are independent.
\end{itemize}
While the total number of points in the cloud described by $N$ is an infinite sequence, the number of points that ``exceed'' a threshold $\bu \in \R^D$ (i.e., points $\bX_i$ such that $\bX_i \not\le \bu$) is necessarily finite, that is, $N(B)$ is finite when $B$ remains away from $-\binfty$.

\begin{figure}
\begin{center}
\begin{tikzpicture}
\draw[color=gray,->,>=stealth] (0,0) -- (5,0);
\draw[color=gray,->,>=stealth] (0,0) -- (0,4);
\draw[color=gray] (0,0) node [below left] {$0$};
\filldraw[color=gray!20] (2,2) -- (5,2) -- (5,4) -- (2,4) -- cycle;
\draw[dashed] (1,1) -- (4,1) -- (4,3) -- (1,3) -- cycle;

\draw[->,>=latex,dotted] (1,1) -- (2,2);
\draw[->,>=latex,dotted] (4,1) -- (5,2);
\draw[->,>=latex,dotted] (1,3) -- (2,4);

\draw (1.5,2.5) node {$B$};
\draw (3,3.5) node {$B + \log n$};
\draw (1.2,0.2) node {$\bullet$};
\draw (2.0,0.7) node {$\bullet$};
\draw (2.6,1.5) node {$\bullet$};
\draw (3.2,2.7) node {$\bullet$};
\draw (1.5,1.9) node {$\bullet$};
\draw (1.9,1.3) node {$\bullet$};
\draw (0.7,0.8) node {$\bullet$};
\draw (0.5,0.3) node {$\bullet$};
\draw (3.2,2.7) node[right] {$\bX_i$};
\draw (0.1,1.6) node {$\bullet$};
\draw (1.3,0.7) node {$\bullet$};
\draw (1.3,3.7) node {$\bullet$};
\end{tikzpicture}
\end{center}
\caption{\label{ch7:fig:Blogn} As the sample size $n$ grows, the failure set $B + \log n$ escapes to $+\binfty$. As the sample size $n$ grows, the number $N_n(B)$ of sample points $\bE_1,\ldots,\bE_n$ that fall in the risk region $B + \log n$ converges in distribution to a Poisson random variable with expectation $\Lambda(B)$.}
\end{figure}
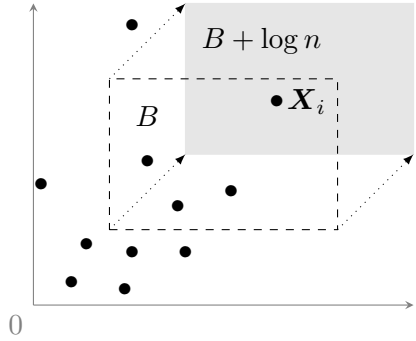

\paragraph{Peaks-over-thresholds}

The exponent measure is connected to the MGP distribution. Recall the set $\LL = \{ \bx : \bx \nleq \bzero \}$ in Eq.~\eqref{ch7:eq:LL} of possible threshold excesses and let $B \subseteq \LL$. In view of Eq.~\eqref{ch7:eq:mrvexp}, we have 
\begin{equation}
\label{ch7:eq:E2LamLL}
    \lim_{t \to \infty} \e^t \pr(\max \bE > t) = \Lambda(\LL)
\end{equation}
and thus
\[
    \lim_{t \to \infty}
    \pr( \bE \in B + t \mid \max \bE > t ) = \frac{\Lambda(B)}{\Lambda(\LL)}
\]
for sufficiently regular\footnote{The topological boundary of $B$ should be a $\Lambda$-null set.} sets $B$. But, for threshold vectors $\bu = t \bone = (t, \ldots, t)\T$, the limit distribution in the previous equation should also be an MGP distribution. We find the following connection; a proof is given in Section~\ref{ch7:sec:math}.

\begin{proposition}
\label{ch7:prop:mgpLam}
If $\Lambda$ is an exponent measure as in Definition~\ref{ch7:def:Lam}, then the distribution of the random vector $\bZ$ defined by
\begin{equation}
\label{ch7:eq:Lam2mgp}
    \pr( \bZ \in B ) = \frac{\Lambda(B)}{\Lambda(\LL)},
    \qquad B \subseteq \LL,
\end{equation}
is standard MGP with 
\begin{equation}
\label{ch7:eq:ESjeq}
    \pr(Z_1 > 0) = \cdots = \pr(Z_D > 0).
%    \expec[\exp(S_1)] = \ldots = \expec[\exp(S_d)]. 
\end{equation} 
Conversely, given a standard MGP random vector $\bZ \sim \mgp(\bone,\bzero,\bS)$ that satisfies \eqref{ch7:eq:ESjeq}, we can define an exponent measure $\Lambda$ by
\begin{equation}
\label{ch7:eq:mgp2Lam}
    \Lambda(B) = \frac{1}{\pr(Z_j > 0)} \int_{-\infty}^\infty \pr( t + \bS \in B ) \, \e^{-t} \, \diff t,
\end{equation}
for $B \subset [-\infty,\infty)^D \setminus \{-\binfty\}$, and then Eq.~\eqref{ch7:eq:Lam2mgp} holds. The common value in Eq.~\eqref{ch7:eq:ESjeq} is equal to 
\begin{equation}
\label{ch7:eq:ESjeqlam}
    \pr(Z_j > 0) = \expec(\e^{S_j}) = \frac{1}{\Lambda(\LL)},
    \qquad j = 1,\ldots,D.
\end{equation}
\end{proposition}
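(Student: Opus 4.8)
The plan is to prove the two implications separately and to extract Eq.~\eqref{ch7:eq:ESjeqlam} along the way. It is convenient to record two facts about $\LL = \{\bx : \max\bx > 0\}$ first: one has $\{\bx : \max\bx > t\} = \LL + t$, so $\LL$ is bounded away from $-\binfty$ (take $u = -1$), hence $1 = \Lambda(\{x_1 \ge 0\}) \le \Lambda(\LL) < \infty$ by Definition~\ref{ch7:def:Lam}; in particular the right-hand side of \eqref{ch7:eq:Lam2mgp} genuinely defines a probability measure, concentrated on $\LL$.

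\emph{From $\Lambda$ to a standard MGP law.} Let $\bZ$ have the distribution in \eqref{ch7:eq:Lam2mgp} and set $E = \max\bZ$, $\bS = \bZ - E$; I would check conditions (i)--(ii) of Definition~\ref{ch7:def:mgp}. For (i): for $t \ge 0$ we have $\{E > t\} = \{\bZ \in \LL + t\}$ with $\LL + t \subseteq \LL$, so homogeneity \eqref{ch7:eq:Lamhomo} gives $\pr(E > t) = \Lambda(\LL+t)/\Lambda(\LL) = \e^{-t}$. For (ii), the one step that needs an idea rather than a computation is the \emph{translation identity}: for any Borel set $C$ the set $A_t := \{\bx : \bx - \max\bx \in C,\ \max\bx > t\}$ satisfies $A_t = A_0 + t$, because $\bx \mapsto \max\bx$ is translation-equivariant. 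Consequently, for $t \ge 0$, $\pr(\bS \in C,\, E > t) = \Lambda(A_t)/\Lambda(\LL) = \e^{-t}\,\Lambda(A_0)/\Lambda(\LL) = \pr(E > t)\,\pr(\bS \in C)$, and the case $t < 0$ is trivial since $E > 0$ a.s.; this is exactly independence of $\bS$ and $E$. Since $\bS = \bZ - \max\bZ \le \bzero$ and $\max\bS = 0$ by construction, and $\pr(S_j > -\infty) \ge \pr(Z_j \ge 0) = \Lambda(\{x_j \ge 0\})/\Lambda(\LL) = 1/\Lambda(\LL) > 0$, all requirements of (ii) hold and $\bZ \sim \mgp(\bone,\bzero,\bS)$. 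Finally the ``unit-exponential scale'' identity $\Lambda(\{x_j > 0\}) = 1$ (stated just after Definition~\ref{ch7:def:Lam}) yields $\pr(Z_j > 0) = 1/\Lambda(\LL)$ for every $j$, i.e.\ \eqref{ch7:eq:ESjeq}.

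\emph{From a standard MGP law to $\Lambda$.} Now start from $\bZ \sim \mgp(\bone,\bzero,\bS)$ with $p := \pr(Z_j > 0)$ independent of $j$, and let $\Lambda$ be defined by \eqref{ch7:eq:mgp2Lam}. That $\Lambda$ is a measure is Tonelli's theorem; homogeneity \eqref{ch7:eq:Lamhomo} drops out of the substitution $t \mapsto t - s$ in the integral defining $\Lambda(B+s)$. Because $\max\bS = 0$ a.s., one has $t + \bS \in \LL \iff t > 0$, so $\Lambda(\LL) = p^{-1}\int_0^\infty \e^{-t}\,\diff t = p^{-1} < \infty$, and any $B$ bounded away from $-\binfty$ lies in some $\LL + u$, whence $\Lambda(B) \le \e^{-u}/p < \infty$. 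For the normalization, a Fubini computation (using $S_j \le 0$, and $\e^{S_j} = 0$ on $\{S_j = -\infty\}$) gives $\int \pr(t + S_j \ge 0)\,\e^{-t}\,\diff t = \expec(\e^{S_j}) = p$ by \eqref{ch7:eq:PZjx} at $x = 0$, so $\Lambda(\{x_j \ge 0\}) = 1$; thus $\Lambda$ is an exponent measure. To obtain \eqref{ch7:eq:Lam2mgp}, take $B \subseteq \LL$: then $t + \bS \notin B$ for $t \le 0$, so \eqref{ch7:eq:mgp2Lam} reduces to $p^{-1}\int_0^\infty \pr(t + \bS \in B)\,\e^{-t}\,\diff t$, which by \eqref{ch7:eq:S2Z} equals $p^{-1}\pr(\bZ \in B) = \pr(\bZ \in B)/\Lambda(\LL)$. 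Finally \eqref{ch7:eq:ESjeqlam} follows from $\pr(Z_j > 0) = \expec(\e^{S_j})$ (Eq.~\eqref{ch7:eq:PZjx} at $x=0$) together with $\Lambda(\LL) = p^{-1}$ shown above.

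I expect the only genuinely delicate point to be the translation identity $A_t = A_0 + t$ in the first part, which is what converts homogeneity of $\Lambda$ into the independence of $\bS$ and $E$; everything else is bookkeeping, the main nuisance being the coherent treatment of coordinates equal to $-\infty$ throughout (in $\max$, in the shift $\bx \mapsto \bx - \max\bx$, and in the convention $\e^{S_j} = 0$ when $S_j = -\infty$).
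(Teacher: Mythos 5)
Your proof is correct and follows essentially the same route as the paper's: unit-exponentiality of $E=\max\bZ$ from homogeneity, independence of $E$ and $\bS$ via the translation identity for $\{\bx:\bx-\max\bx\in C,\ \max\bx>t\}$, and in the converse direction the splitting of the integral at $t=0$ using $\max\bS=0$ to get $\Lambda(\LL)=1/\pr(Z_j>0)$ and Eq.~\eqref{ch7:eq:Lam2mgp}. Your extra bookkeeping (finiteness of $\Lambda$ on sets bounded away from $-\binfty$, the $\ge 0$ versus $>0$ normalization, bounding $\pr(S_j>-\infty)$ via $\pr(Z_j\ge 0)$) is sound and only a cosmetic variation on the paper's argument.
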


The often recurring value $\Lambda(\LL)$ is known as the \emph{extremal coefficient} and lies within the range\footnote{The inequalities follow from $\LL = \bigcup_{j=1}^D \{ \bx : x_j > 0 \}$ and Eq.~\eqref{ch7:eq:Lamnorm}.}
\begin{equation}
\label{ch7:eq:extrcoeff1D}
    1 \le \Lambda(\LL) \le D.
\end{equation}
Its reciprocal can be interpreted as the limiting probability that a specific component of a random vector exceeds a large quantile given that at least one component in that random vector does so: rewriting Eq.~\eqref{ch7:eq:E2LamLL} gives
\[
    \lim_{t \to \infty} \pr(E_j > t \mid \max \bE > t) = \frac{1}{\Lambda(\LL)}.
\]
This interpretation is also in line with Eq.~\eqref{ch7:eq:ESjeqlam}, as we have $\max \bZ > 0$ by definition. In dimension $D = 2$, the extremal coefficient stands in one-to-one relation with the tail dependence coefficient $\chi$ in Eq.~\eqref{ch7:eq:chi} via
\[ 
    \Lambda(\LL) = 2 - \chi.
\]
In general dimension $D$, the larger the extremal coefficient $\Lambda(\LL)$, the smaller the limiting probability $1/\Lambda(\LL)$ and thus the weaker the tail dependence. The two boundary values $\Lambda(\LL) = 1$ and $\Lambda(\LL) = D$ correspond to the cases of complete dependence and asymptotic independence, respectively, as already encountered in the study of MGP distributions:
\begin{itemize}
\item For \emph{complete dependence}, when $\bS = \bzero$ almost surely, $\Lambda$ is concentrated on the diagonal $\{ t \bone : t \in \R \}$, on which it has density $\e^{-t}$; more precisely,
\begin{equation}
\label{ch7:eq:compdepLam}
    \Lambda(B) = \int_{-\infty}^\infty \indic(t \bone \in B) \, \e^{-t} \, \diff t = \int_{t \in \R: t \bone \in B} \e^{-t} \, \diff t.
\end{equation}
\item For the case referred to as \emph{asymptotic independence} in Eq.~\eqref{ch7:eq:asyindepS}, when one randomly chosen component of $\bS$ is zero while all others are $-\infty$, the measure $\Lambda$ is concentrated on the union of the $D$ sets $\{ \bx : x_j \in \R, \max_{k:k\ne j} x_k = -\infty \}$ for $j = 1,\ldots,D$, and on each such set, the density of $\Lambda$ is $\e^{-x_j}$. More precisely, the identity~\eqref{ch7:eq:ESjeqlam} then forces $\pr(S_j = 0) = 1/D$ and thus
\begin{equation}
\label{ch7:eq:asyindepLam}
    \Lambda(B) = \sum_{j=1}^D \int_{-\infty}^{\infty} \indic_B(-\infty,\ldots,x_j,\ldots,-\infty) \, \e^{-x_j} \, \diff x_j,
\end{equation}
where all coordinates of the point in the indicator function are $-\infty$ except for the $j$th one, which is $x_j$.
\end{itemize}

\paragraph{Exponent measure density}

Often, $\Lambda$ does not have any mass on regions where one or more coordinates are $-\infty$ but is concentrated on $\R^D$ or a subset thereof. This happens when $S_j$ is never $-\infty$, for each $j = 1,\ldots,D$. For many models, $\Lambda$ has a density on $\R^D$, denoted by the function $\lambda(\point)$, in the sense that $\Lambda(B) = \int_B \lambda(\bx) \, \diff \bx$ for $B \subseteq \R^D$. By Equations~\eqref{ch7:eq:Lamnorm} and~\eqref{ch7:eq:Lamhomo}, this density then satisfies
\begin{align}
\label{ch7:eq:lamnorm}
    &\int_{\bx \in \R^D: x_j \ge 0} \lambda(\bx) \, \diff \bx = 1, && j = 1,\ldots,D, \\
\label{ch7:eq:lamhomo}
    &\lambda(\bx + t) = \e^{-t} \lambda(\bx), && t \in \R, \; \bx \in \R^D.
\end{align}

The density of the MGP vector $\bZ$ associated to $\Lambda$ as in Eq.~\eqref{ch7:eq:Lam2mgp} is then proportional to $\lambda$:
\begin{equation}
\label{ch7:eq:lam2pZ}
    p_{\bZ}(\bz) = \frac{\lambda(\bz)}{\Lambda(\LL)}, 
    \qquad \bz \nleq \bzero.
\end{equation}
Conversely, by Eq.~\eqref{ch7:eq:ESjeqlam}, the exponent measure density $\lambda$ can be recovered from the probability density of such a random vector $\bZ$ by
\begin{equation}
\label{ch7:eq:pZ2lam}
    \lambda(\bz) = \frac{p_{\bZ}(\bz)}{\pr(Z_j > 0)}, \qquad \bz \nleq \bzero,
\end{equation}
together with the translation property in Eq.~\eqref{ch7:eq:lamhomo}. These formulas allow to pass back and forth between MGP densities and exponent measure densities.

% \js{Éventuellement, ajouter formule pour la densité $\lambda$ en fonction de celle de $\bT$ ou de $\bU$, comme dans \eqref{ch7:eq:mgppdfT} et \eqref{ch7:eq:mgppdfU}.}

\paragraph{Exponent measure on unit-Pareto scale}

In the literature, the exponent measure is classically defined on $[0, \infty)^D \setminus \{ \bzero \}$ rather than on $[-\infty,\infty)^D \setminus \{-\binfty\}$. If we let the symbol $\nu$ denote this version of the exponent measure, then $\nu$ is connected to $\Lambda$ in Definition~\ref{ch7:def:Lam} via the change of variables from $[-\infty, \infty)$ to $[0, \infty)$ by $x_j \mapsto \e^{x_j}$ for all components $j = 1,\ldots,D$: we have
\begin{equation}
\label{ch7:eq:Lam2nu}
    \nu(B) = \Lambda(\{ \bx : \e^{\bx} \in B \}),
    \qquad B \subseteq [0, \infty)^D \setminus \{ \bzero \}.
\end{equation}
The two conditions on $\Lambda$ in Definition~\ref{ch7:def:Lam} translate into the following requirements on $\nu$:
\begin{align}
\label{ch7:eq:nu1}
    &\nu( \{ \bx : x_j \ge 1 \} ) = 1, && j = 1,\ldots,D; \\
\label{ch7:eq:nuhuomo}
    &\nu(t B) = t^{-1} \nu(B), && 0 < t < \infty, \; B \subseteq [0, \infty)^D \setminus \{ \bzero \}.
\end{align}
%\begin{enumerate}[(i)]
%\item $\nu( \{ \bx : x_j \ge 1 \} ) = 1$ for all $j = 1,\ldots,D$;
%\item $\nu(t B) = t^{-1} \nu(B)$ for all $0 < t < \infty$ and $B \subseteq [0, \infty)^D \setminus \{ \bzero \}$.
%\end{enumerate}
These two properties of $\nu$ imply that 
\[ 
    \nu(\{ \bx : x_j > y \}) = 1/y, \qquad y > 0, \; j=1,\ldots,d,
\]
which is why $\nu$ is thought to have ``unit-Pareto scale'', in contrast to the unit-exponential scale of $\Lambda$.

The advantage of using the unit-Pareto scale of $\nu$ rather than the unit-exponential scale of $\Lambda$ is that there is no more need to consider points with some coordinates equal to $-\infty$. When translating things back to multivariate (generalized) Pareto distributions, the drawback is that the additive formulas in Section~\ref{ch7:sec:mgp} become multiplicative. The choice is a matter of taste. Depending on the context, either $\Lambda$ or $\nu$ can be more convenient. To read and understand the extreme value literature, it is helpful to know both and to be aware of their connection. Conceptually, the meaning of $\nu$ is the same as that of $\Lambda$, as a measure of the intensity with which points of a large sample hit an extreme risk region. It is only the univariate marginal scale that is different.

\paragraph{Angular measure}

Another advantage of the unit-Pareto scale exponent measure $\nu$ is that it allows for a geometrical interpretation of dependence between extreme values of different variables. The point of view goes back to the origins of multivariate extreme value theory in \cite{dehaan:resnick:1977}. Given that a multivariate observation exceeds a high threshold, how do the magnitudes of the $D$ variables relate to each other? Imagine the bivariate case. If the point representing the observation lies close to the horizontal axis, it means that the first variable was large, but not the second one. The picture is reversed if the point is situated to the vertical axis. If, however, the point is situated in the vicinity of the diagonal, both variables were large simultaneously, a sign of strong dependence.

To make this more precise, we can use polar coordinates and investigate the distribution of the angular component of those points that exceed a high multivariate threshold. This approach turns out to work best on a Pareto scale. The distribution of angles of extreme observations is called the angular or spectral measure, and many statistical techniques in multivariate extreme value analysis are based on it. 
%\js{Refer to an example later in the book?}

Recall that a \emph{norm} $\nbr{\point}$ on Euclidean space $\R^D$ is a function that assigns to each point $\bx \in \R^D$ its distance to the origin $\bzero$. The distance can be measured in different ways. The most common norms are the $L_p$-norms for $p \in [1, \infty]$, which are defined by
\[
    \nbr{\bx}_p =
    \begin{dcases}
    \rbr{|x_1|^p + \cdots + |x_D|^p}^{1/p},
    & \text{if $1 \le p < \infty$,} \\
    \max(|x_1|,\ldots,|x_D|),
    & \text{if $p = \infty$.}
    \end{dcases}
\]
The most frequently chosen values are $p = 1, 2, \infty$, yielding the Manhattan (taxi-cab), Euclidean, and Chebyshev or supremum norms, respectively.
The \emph{unit sphere} is the set of points with unit norm, $\{ \bx : \nbr{\bx} = 1 \}$. For the $L_2$-norm, this is the usual sphere in Euclidean geometry, while for $p = \infty$ the unit sphere is actually the surface of the cube $[-1,1]^D$, whereas for $p = 1$ it becomes a diamond or a multivariate analogue thereof.

For a non-zero point $\bx \in \R^D$, consider a generalized version of polar coordinates. The radial component $\nbr{\bx} > 0$ quantifies the overall magnitude of the point. The angular component $\bx / \nbr{\bx}$ is a point on the unit sphere and determines the direction of the point, or more specifically, the half-ray from the origin to the point. In the bivariate case, when $\nbr{\point}$ is the Euclidean norm, we retrieve the traditional polar coordinates $(r, \theta)$ of a point $(x_1,x_2)$ in the plane. The radius is $\nbr{\bx} = \sqrt{x_1^2+x_2^2} = r$ and the angular component is the point $\bx / \nbr{\bx} = (\cos \theta, \sin \theta)$ on the unit circle with angle $\theta \in [0, 2\pi)$.

Recall that the support of the exponent measure $\nu$ on unit-Pareto scale is contained in the positive orthant $[0, \infty)^D$. Thinking of $\nu$ as a kind of distribution, we can imagine the distribution of the angular component $\bx / \nbr{\bx}$ given that the radial component $\nbr{\bx}$ is large. The latter condition can be encoded by $\nbr{\bx} > 1$, because the measure $\nu$ is homogeneous by Eq.~\eqref{ch7:eq:nuhuomo}. The distribution of the angle given that the radius is large is called the \emph{angular measure}. The support of the angular measure is contained in the intersection of $[0, \infty)^D$ with the unit sphere with respect to the chosen norm. This space is denoted here by $\sphere = \{ \bx \in [0, \infty)^D : \nbr{\bx} = 1 \}$ and collects all points in $\R^D$ with nonnegative coordinates and unit norm. 
%The homogeneity of $\nu$ in requirement~(ii) above implies that it is determined by a measure on a $(D-1)$-dimensional space. 

\begin{definition}
The \emph{angular} or \emph{spectral measure} of an exponent measure $\nu$ with respect to a norm $\nbr{\point}$ on $\R^D$ is the measure $H$ defined on $\sphere$ by
\[
    H(B) 
    = \nu(\{ \bx \ge \bzero : \nbr{\bx} \ge 1, \, \bx / \nbr{\bx} \in B \}),
    \qquad B \subseteq \sphere.
\]
\end{definition}

The homogeneity of $\nu$ implies that it is determined by its angular measure $H$ via
\begin{equation}
\label{ch7:eq:H2nu}
    \nu(\{ \bx \ge \bzero : \nbr{\bx} \ge r, \, \bx / \nbr{\bx} \in B \})
    = r^{-1} H(B)
\end{equation}
for $0 < r < \infty$ and $B \subseteq \sphere$; see Figure~\ref{ch7:fig:ang}. The above formula says that, in ``polar coordinates'' $(r, \bw) = (\nbr{\bx}, \bx / \nbr{\bx}) \in (0, \infty) \times \sphere$, the exponent measure $\nu$ is a product measure with radial component $r^{-2} \, \diff r$ and angular component $H(\diff \bw)$. A measure-theoretic argument beyond the scope of this chapter confirms that $\nu$ can be recovered from $H$.
%via
%\begin{equation}
%\label{ch7:eq:H2nuint}
%    \nu(B) = \int_0^{\infty} \int_{\sphere} \indic_B(r \bw) \, H(\diff \bw) \, r^{-2} \, \diff r,
%    \qquad B \subset [0, \infty)^D \setminus \{\bzero\}.
%\end{equation}
Modelling $H$ thus provides a way to model $\nu$. An advantage of working with $H$ is that it is supported by the $(D-1)$-dimensional space $\sphere$. Especially for $D = 2$, this simplifies the task of modelling exponent measures to modelling a univariate distribution on a bounded interval.

\begin{figure}
\begin{center}
\begin{tikzpicture}[scale=1]
\draw[->,>=stealth] (0,0) -- (0,3);
\draw[->,>=stealth] (0,0) -- (4,0);
\draw[color=gray] (0,0) node[below] {$0$};
\draw[color=gray] (1,0) node[below] {$1$};
\draw (1,0) arc (0:90:1);
\draw[dashed] (2,0) arc (0:90:2);
\draw (2,0) node[below] {$r$};
\draw[domain=10:60,very thick,color=blue] plot (\x:1);
\draw[color=blue] (0.5,0.5) node {$B$};
\draw[dashed] (0, 0) -- (10:4);
\draw[dashed] (0, 0) -- (60:3);
\fill[color=gray!20]
	plot [domain=10:60] (\x:2) --
	(60:3) -- (3.92,2.60) --
	(10:4) -- cycle;
\end{tikzpicture}
\end{center}
\caption{\label{ch7:fig:ang} The exponent measure $\nu$ on $[0, \infty)^D$ is determined by the angular measure $H$ on $\sphere = \{ \bx \ge \bzero : \nbr{\bx} = 1 \}$ by homogeneity via Equation~\eqref{ch7:eq:H2nu}. The set in gray is $\{ \bx \ge \bzero : \nbr{\bx} > r, \, \bx / \nbr{\bx} \in B \}$. In the picture, $\nbr{\point}$ is the Euclidean norm, but other norms can be used too. We think of $H(B)$ in terms of the likelihood of large observations to have a direction in the set $B$, at least when normalized to a certain common scale. According to the direction of the point representing the large observation, one variable may be large without the other being so (directions close to $0^\circ$ or $90^\circ$), or both variables can be large simultaneously (directions close to $45^\circ$).}
\end{figure}

The marginal constraints $\nu(\{ \bx : x_j \ge 1 \}) = 1$ for $j = 1,\ldots,D$ in Eq.~\eqref{ch7:eq:nu1} imply that the angular measure satisfies
\begin{equation}
\label{eq:momentconstraints}
    \int_{\sphere} w_j \, H(\diff \bw) = 1, \qquad j = 1,\ldots,D.
\end{equation}
The total mass of the angular measure $H$ is finite but can vary with $\nu$. A notable exception occurs for the $L_1$-norm, when $\sphere$ becomes the unit simplex $\Delta = \{ \bx \ge \bzero : x_1+\cdots+x_D = 1\}$: adding up the $D$ moment constraints yields a total mass of $H(\Delta) = D$. Dividing $H$ by $D$ then yields a probability distribution, say $P_H(\point) = H(\point)/D$ on the unit simplex, \chg{and this is a matter of preference; in this case, the moment constraints in \eqref{eq:momentconstraints} become $\int_{\Delta} w_j \, P_H(\diff \bw) = 1/D$ for $j = 1,\ldots,D$.}
Models for probability distributions on the unit simplex with all $D$ marginal expectations equal to $1/D$ thus translate directly into models for exponent measures via Eq.~\eqref{ch7:eq:H2nu}. It is in this way, for instance, that the extremal Dirichlet model was constructed in \cite{coles:tawn:1991}.

In case $D = 2$, the unit simplex is equal to the segment $\Delta = \{ (w, 1-w) : w \in [0, 1] \}$, often identified with the interval $[0, 1]$. Modelling bivariate extremal independence is thereby reduced to modelling a probability distribution on $[0, 1]$ with expectation $1/2$. The tail dependence coefficient $\chi$ in Eq.~\eqref{ch7:eq:chi} can be shown to be
\[
    \chi = \int_{[0, 1]} \min(w, 1-w) \, \diff H(w).
\]
The two boundary values of the range of $\chi$ have clear geometric meanings:
\begin{itemize}
\item We have $\chi = 0$ (asymptotic independence) if and only if $H$ is concentrated on the points $(0, 1)$ and $(1, 0)$ of the unit simplex, that is, extreme values can never occur in two variables simultaneously.
\item We have $\chi = 1$ (complete dependence) if and only $H$ is concentrated on the point $(1/2,1/2)$ of the unit simplex, meaning that all extreme points lie on the main diagonal (after marginal standardization).
\end{itemize}
%\js{Include the density of the angular measure provided this is needed in later chapters.}

\paragraph{Dependence functions}

If you find all this measure-theoretic machinery a bit heavy, then you are not alone. Computationally, it is often more convenient to work with functions rather than with measures, in the same way that a probability distribution can be identified by its (multivariate) cumulative distribution function. 
The \emph{exponent function} $V$ of an exponent measure $\nu$ or $\Lambda$ is defined by
\begin{equation}
\label{ch7:eq:expfunc}
    V(\by) 
    = \nu(\{ \bx : \bx \nleq \by \}) 
    = \Lambda(\{ \bx : \bx \nleq \log \by \}),
    \qquad \by \in (0, \infty]^D,
\end{equation}
while the \emph{stable tail dependence function} $\ell$ is
\begin{equation}
\label{ch7:eq:stdf}
    \ell(\by) = V(1/\by), \qquad \by \in [0, \infty)^D.
\end{equation}
The exponent function $V$ appears in formula~\eqref{ch7:eq:V2G} below of a multivariate extreme value distribution with unit-Fréchet margins, whereas the stable tail dependence function $\ell$ is convenient when studying extremes from the viewpoint of copulas, a perspective we will not develop in this chapter. The restriction of $\ell$ to the unit simplex $\Delta$ is called the \emph{Pickands dependence function}, and this function determines $\ell$ via the homogeneity in Eq.~\eqref{ch7:eq:Vellhomo} below. The special value
\begin{equation} 
\label{ch7:eq:extrcoeff}
    V(\bone) = \ell(\bone) = \Lambda(\LL) \in [1, D],
\end{equation}
is equal to the extremal coefficient that we already encountered in Eq.~\eqref{ch7:eq:extrcoeff1D}.

The functions $V$ and $\ell$ pop up naturally in the study of multivariate extreme value distributions, see Definition~\ref{ch7:def:mev} and Eq.~\eqref{ch7:eq:V2G} below. Furthermore, the distribution function of the MGP random vector $\bZ$ constructed from $\Lambda$ via \eqref{ch7:eq:Lam2mgp} can be expressed in terms of $V$ and $\ell$ too: rewriting Proposition~4 in \cite{Rootzen:Segers:Wadsworth:2018b}, we find
\begin{equation}
\label{ch7:eq:stdf2Z}
    \pr(\e^{\bZ} \le \by) = \frac{V(\by \wedge \bone) - V(\by)}{V(\bone)},
    \qquad \by \in (0, \infty)^D,
\end{equation}
where $\by \wedge \bone = (\min(y_j, 1))_{j=1}^D$. In particular, we have
\[
    \pr(\e^{\bZ} \nleq \by) = \frac{V(\by)}{V(\bone)}, \qquad \by \ge \bone,
\]
expressing $V$ as a kind of complementary distribution function.

%\js{Phase out the following lines, with which we don't do anything?}
Both functions $V$ and $\ell$ inherit their homogeneity property from $\nu$: by Eq.~\eqref{ch7:eq:nuhuomo}
\begin{equation}
\label{ch7:eq:Vellhomo}
    V(t\by) = t^{-1} V(\by)
    \text{ and }
    \ell(t\by) = t \ell(\by),
    \qquad t \in (0, \infty).
\end{equation}
The marginal constraints on $\nu$ in Eq.~\eqref{ch7:eq:nu1} yield, for all $j = 1, \ldots, D$, the identities
\[
    V(\infty,\ldots,\infty,1,\infty,\ldots,\infty)
    =
    \ell(0,\ldots,0,1,0,\ldots,0)
    =
    1,
\]
where the element $1$ appears on the $j$th place. Furthermore, $\ell$ is convex. Nevertheless, these properties do not characterize the families of exponent functions and stable tail dependence functions. It is for this reason that modelling $V$ or $\ell$ directly is not very practical, as it is difficult to see from their functional form whether they actually constitute a valid exponent measure function or stable tail dependence function, respectively.

The two boundary cases of complete dependence and asymptotic independence permit particularly simple representations in terms of the stable tail dependence function $\ell$. In case of complete dependence, we have\footnote{Formulas~\eqref{ch7:eq:stdfcompdep} and~\eqref{ch7:eq:stdfasyindep} follow for instance from the expressions for $\Lambda$ in Equations~\eqref{ch7:eq:compdepLam} and~\eqref{ch7:eq:asyindepLam} in combination with~Eq.~\eqref{ch7:eq:expfunc}.}
\begin{equation}
\label{ch7:eq:stdfcompdep}
    \ell(\by) = \max(y_1,\ldots,y_D), \qquad \by \in [0, \infty)^D,
\end{equation}
whereas in case of asymptotic independence, we have
\begin{equation}
\label{ch7:eq:stdfasyindep}
    \ell(\by) = y_1 + \cdots + y_D, \qquad \by \in [0, \infty)^D.
\end{equation}
These two expressions will get a straightforward statistical meaning in connection to multivariate extreme value distributions through Eq.~\eqref{ch7:eq:stdf2G} below.

% \js{See if we keep this:}
A convenient representation that \chg{permits generation of a valid stable tail dependence function $\ell$} is
\begin{equation}
\label{ch7:eq:Dnorm}
    \ell(\by) = \expec\cbr{\max\rbr{\by \e^{\bU}}},
    \qquad \by \in [0, \infty)^D,
\end{equation}
where $\bU$ is a random vector in $[-\infty,\infty)^D$ such that $\expec[\e^{U_j}] = 1$ for all $j = 1,\ldots,D$. This function $\ell$ is associated to the exponent measure $\Lambda$ obtained as in Proposition~\ref{ch7:prop:mgpLam} from $\bS$ determined in turn by $\bU$ as in Eq.~\eqref{ch7:eq:U2S}. Particular choices of $\bU$ lead to common parametric models for $\Lambda$ and $\ell$, as we will see in Section~\ref{ch7:sec:par}. Formula~\eqref{ch7:eq:Dnorm} identifies $\ell$ as a \emph{D-norm}, an in-depth study of which is undertaken in the monograph \cite{Falk:2019}.

% \js{Table with formulas connecting the various objects?}

\section{Multivariate Extreme Value Distributions}
\label{ch7:sec:mev}
\paragraph{Multivariate block maxima}

Historically, multivariate extreme value theory begins with the study of distribution functions of vectors of component-wise maxima and asymptotic models for these as the sample size tends to infinity
\cite{beirlant:goegebeur:teugels:segers:2004,dehaan:ferreira:2006,resnick:2008,Falk11}. As we will see, the multivariate extreme value distributions that arise in this way can be understood via the threshold excesses and point processes in the earlier sections.

Recall that the distribution function $F$ of a $D$-variate random vector $\bX$ is the function 
\[ 
	F(\bx) = \pr(\bX \le \bx) = \pr(X_1 \le x_1, \ldots, X_D \le x_D),
	\qquad \bx \in \R^D. 
\]
The (univariate) \emph{margins} $F_1,\ldots,F_D$ of $F$ are the distribution functions of the individual random variables,
\[
	F_j(x_j) = \pr(X_j \le x_j),
	\qquad x_j \in \R, \; j = 1, \ldots, D.
\]
Let $\bX_1,\ldots,\bX_n$ be an independent random sample of $F$, the $i$th sample point being $\bX_i = (X_{i,1},\ldots,X_{i,D})\T$. The sample maximum of the $j$th variable is
\[
    M_{n,j} = \max(X_{1,j},\ldots,X_{n,j})
\]
and joining these maxima into a single vector yields
\begin{equation}
\label{ch7:eq:Mn}
    \bM_n = (M_{n,1},\ldots,M_{n,D})\T.
\end{equation}
The vector $\bM_n$ may not be a sample point itself, since the maxima in the $D$ variables need not occur simultaneously. Still, the study of the distribution of $\bM_n$ has some practical significance: if $X_{i,j}$ denotes the water level on day $i = 1,\ldots,n$ at location $j = 1,\ldots,D$, then, given critical water levels $x_1,\ldots,x_D$ at the $D$ locations, the event $\bM_n \le \bx$ with $n = 365$ signifies that in a given year, no critical level is exceeded at any of the $D$ locations; see Figure~\ref{ch7:fig:max-pot-L} in the case $D = 2$.

Conveniently, the distribution function of $\bM_n$ is related to $F$ in exactly the same way as in the univariate case: for $\bx \in \R^d$, we have $\bM_n \le \bx$ if and only if $\bX_i \le \bx$ for all $i = 1,\ldots,n$. Since the $n$ sample points are independent and identically distributed with common distribution function $F$, we find
\[
    \pr(\bM_n \le \bx) = F^n(\bx).
\]
The aim in classical multivariate extreme value theory is to propose appropriate models for $\bM_n$ based on large-sample calculations, that is, when $n \to \infty$. From the univariate theory in 
an earlier chapter in the handbook,
% Chapter~\verb+\ref{ch:whyhow}+, 
we know that stabilizing the univariate maxima requires certain location-scale transformations. For each margin $j = 1,\ldots,D$, consider an appropriate scaling $a_{n,j} > 0$ and location shift $b_{n,j} \in \R$, to obtain the location-scale stabilized vector of maxima
\[
    \frac{\bM_n - \boldsymbol{b}_n}{\boldsymbol{a}_n}
    =
    \rbr{ \frac{M_{n,1} - b_{n,1}}{a_{n,1}},\ldots, \frac{M_{n,D} - b_{n,D}}{a_{n,D}}}
\]
with distribution function
\[
    \pr \rbr{\frac{\bM_n - \boldsymbol{b}_n}{\boldsymbol{a}_n} \le \bx}
    = F^n( \boldsymbol{a}_n \bx + \boldsymbol{b}_n ).
\]
As in the univariate case, we wonder what the possible large-sample limits would be. Apart from being of mathematical interest, these limit distributions are natural candidates models for multivariate maxima and can be used \chg{to estimate probabilities such as the ones considered in the previous paragraph.}

\paragraph{Multivariate extreme value distributions}

Recall 
% from Chapter~\verb+\ref{ch:whyhow}+ 
that univariate extreme value distributions form a three-parameter family. The extension to the multivariate case requires specifying how the component variables are related. It is here that the exponent measure $\Lambda$ of Definition~\ref{ch7:def:Lam} comes into play.

\begin{definition}
\label{ch7:def:mev}
A $D$-variate distribution $G$ is a \emph{multivariate extreme value (MEV)} distribution if its margins $G_1,\ldots,G_D$ are univariate extreme value distributions, $G_j = \gev(\mu_j,\sigma_j,\xi_j)$ with $\mu_j \in \R$, $\sigma_j > 0$ and $\xi_j \in \R$ for all $j = 1, \ldots, D$, and there exists an exponent measure $\Lambda$ with 
%exponent function $V$ 
stable tail dependence function $\ell$ in Eq.~\eqref{ch7:eq:stdf}
such that
\begin{equation}
%\label{ch7:eq:V2G}
%	G(\bx) = \exp \{ - V( -1/\log G_1(x_1), \ldots, -1/\log G_D(x_D) ) \},
\label{ch7:eq:stdf2G}
    G(\bx) = \exp [ - \ell\{ -\log G_1(x_1), \ldots, -\log G_D(x_D) \} ]
\end{equation}
for all $\bx \in \R^D$ such that $G_j(x_j) > 0$ for all $j = 1, \ldots, D$.
\end{definition}

In the special case that $G$ has unit-Fréchet margins, $G_j = \gev(1,1,1)$ and thus $G_j(y_j) = \e^{-1/y_j}$ for $y_j > 0$, the expression for $G$ becomes
\begin{equation}
\label{ch7:eq:V2G}
    G(\by) = \e^{-V(\by)} = \e^{-\nu(\{ \bx \ge \bzero : \bx \nleq \by \})} , \qquad \by \in (0, \infty)^D,
\end{equation}
with $V$ the exponent function in Eq.~\eqref{ch7:eq:expfunc} and $\nu$ the exponent measure in Eq.~\eqref{ch7:eq:Lam2nu}. It is Eq.~\eqref{ch7:eq:V2G} from which the exponent function and the exponent measure get their name.\footnote{In the earlier literature, the exponent function is often called exponent measure too.}

We have encountered the two boundary cases of complete dependence and asymptotic independence a number of times in this chapter already. For the stable tail function $\ell$, we found the expressions $\ell(\by) = \max(\by)$ and $\ell(\by) = \sum_{j=1}^D y_j$ in Equations~\eqref{ch7:eq:stdfcompdep} and~\eqref{ch7:eq:stdfasyindep}, respectively. Inserting these into the formula~\eqref{ch7:eq:stdf2G} for the MEV distribution $G$ yields
\[
    G(\bx) =
    \begin{dcases}
        \min \{ G_1(x_1), \ldots, G_d(x_d) \}, & \text{complete dependence,} \\
        G_1(x_1) \cdots G_d(x_d), & \text{asymptotic independence}.
    \end{dcases}
\]
Complete dependence thus corresponds the case where all $D$ variables of which $G$ is the joint distribution are monotone increasing functions of the same random variable.\footnote{Borrowing from copula language, the copula of $G$ is the comonotone one.} Asymptotic independence translates simply to (ordinary) independence. More generally, for $\bx$ such that $G_1(x_1) = \cdots = G_D(x_d) = p \in (0, 1)$, we find
\[
    G(\bx) = p^{\ell(\bone)},
\]
where $\ell(\bone) \in [1, D]$ is the extremal coefficient in Eq.~\eqref{ch7:eq:extrcoeff}. One way to interpret the above formula is that $\ell(\bone)$ is the number of ``effectively independent'' components among the $D$ variables modelled by $G$. In case of complete dependence, we have $\ell(\bone) = 1$, and the $D$ variables behave as a single one. In case of asymptotic independence, we have $\ell(\bone) = D$, as \chg{$G$ factorizes into $D$ independent components.} 

Whereas computing the density of an MGP distribution was a relatively straightforward matter, for MEV distributions things are much more complicated due to the exponential function in Equations~\eqref{ch7:eq:stdf2G} and~\eqref{ch7:eq:V2G} in combination with the chain rule. Successive partial differentiation of $G$ with respect to its $D$ arguments leads to a combinatorial explosion of terms that quickly becomes computationally unmanageable as $D$ grows. This is why, even in moderate dimensions, likelihood-based inference methods for fitting MEV distributions to block maxima are based on other functions than the full likelihood. The issue is especially important for spatial extremes, when the dimension $D$ corresponds to the number of spatial locations.
%  (Chapter~\verb+\ref{ch:max}+).

%For instance, in dimensions $D = 2$ and $D = 3$ we find, assuming unit-Fréchet margins $\gev(1,1,1)$, writing $V = V(\bx)$ with $V$ the exponent function in Eq.~\eqref{ch7:eq:expfunc} and indicating partial derivatives with subscripts,
%\begin{align*}
%	g(x_1,x_2) = \frac{\partial^2}{\partial x_1 \partial x_2} \e^{-V(x_1,x_2)} = \e^{-V} (V_1 V_2 - V_{12})
%\end{align*}

\paragraph{Max-stability}

For MGP distributions, the characterizing property was threshold stability (Proposition~\ref{ch7:prop:generalstability}). For MEV distributions, the key structural property is \emph{max-stability}. Intuitively, the annual maximum over daily observations is the same as the maximum of the twelve monthly maxima. So if we model both the annual and monthly maxima with an MEV distribution (assuming independence and stationarity of the daily outcomes), we would like the two models for the annual maximum to be mutually compatible. This is exactly what max-stability says. 

\begin{definition}[Max-stability]
A $D$-variate distribution function $G$ is \emph{max-stable} if the distribution of the vector of component-wise maxima of an independent random sample from $G$ is, up to location and scale, equal to $G$. This means that for every integer $k \ge 2$, there exist vectors $\balpha_k \in (0, \infty)^D$ and $\bbeta_k \in \R^D$ such that $G^k(\balpha_k \bx + \bbeta_k) = G(\bx)$ for all $\bx$.
\end{definition}

\begin{proposition}
\label{ch7:prop:maxstable}
A $D$-variate distribution $G$ with non-degenerate margins is an MEV distribution if and only if it is max-stable.
\end{proposition}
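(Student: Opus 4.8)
The plan is to prove the two implications separately; the direction \textbf{(MEV $\Rightarrow$ max-stable)} is a one-line computation, while essentially all the work sits in the converse.

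\emph{MEV $\Rightarrow$ max-stable.} Suppose $G$ is MEV as in Definition~\ref{ch7:def:mev}, so $G(\bx) = \exp[-\ell\{-\log G_1(x_1),\ldots,-\log G_D(x_D)\}]$ with margins $G_j = \gev(\mu_j,\sigma_j,\evi_j)$, $\sigma_j>0$, which are non-degenerate. From the univariate theory each GEV margin is max-stable: there are $\alpha_{k,j}>0$ and $\beta_{k,j}\in\R$ with $G_j^k(\alpha_{k,j}x+\beta_{k,j}) = G_j(x)$. Raising $G$ to the power $k$ and using the homogeneity $\ell(k\by) = k\,\ell(\by)$ from \eqref{ch7:eq:Vellhomo} gives $G^k(\bx) = \exp[-\ell\{-\log G_1^k(x_1),\ldots,-\log G_D^k(x_D)\}]$; substituting $x_j\mapsto\alpha_{k,j}x_j+\beta_{k,j}$ turns each $G_j^k$ back into $G_j$ and yields $G^k(\balpha_k\bx+\bbeta_k) = G(\bx)$ with $\balpha_k=(\alpha_{k,j})_j$ and $\bbeta_k=(\beta_{k,j})_j$. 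Hence $G$ is max-stable.

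\emph{Max-stable $\Rightarrow$ MEV: reduction to unit-Fréchet margins.} Conversely, assume $G$ is max-stable with non-degenerate margins. Each margin is then univariate max-stable, hence a GEV distribution $G_j=\gev(\mu_j,\sigma_j,\evi_j)$ by the univariate extreme value theorem. Apply the coordinatewise strictly increasing map $x_j\mapsto T_j(x_j):=-1/\log G_j(x_j)$, which sends the support of $G_j$ onto $(0,\infty)$ and pushes $G$ forward to a distribution function $\tilde G$ with unit-Fréchet margins $\tilde G_j(y)=\e^{-1/y}$. Univariate max-stability of $G_j$ is precisely the identity $T_j(\alpha_{k,j}x+\beta_{k,j}) = k\,T_j(x)$, and since componentwise maxima commute with the increasing maps $T_j$, max-stability of $G$ becomes \emph{simple} max-stability of $\tilde G$: $\tilde G^{\,k}(k\by)=\tilde G(\by)$ for every integer $k\ge1$ and $\by\in(0,\infty)^D$. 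Writing $V:=-\log\tilde G$ on $(0,\infty)^D$, this reads $V(k\by)=k^{-1}V(\by)$; extending first over positive rationals and then, by monotonicity of $V$, to all $t\in(0,\infty)$, we obtain the homogeneity $V(t\by)=t^{-1}V(\by)$, while the unit-Fréchet margins give $V(\infty,\ldots,y_j,\ldots,\infty)=1/y_j$ for each $j$.

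\emph{Max-stable $\Rightarrow$ MEV: the measure-theoretic crux.} It remains to exhibit an exponent measure $\nu$ on $[0,\infty)^D\setminus\{\bzero\}$ (the Pareto side of Definition~\ref{ch7:def:Lam}) with $V(\by)=\nu(\{\bx:\bx\nleq\by\})$, because then $\ell(\by):=V(1/\by)$ from \eqref{ch7:eq:stdf} is the stable tail dependence function of the associated $\Lambda$, and undoing the marginal transformations $T_j$ in $\tilde G=\e^{-V}$ returns exactly the representation \eqref{ch7:eq:stdf2G} for the original $G$, with the required GEV margins. I would construct $\nu$ as the limit (in the appropriate vague sense, on sets bounded away from $\bzero$) of the normalized tail measures $\nu_n(\cdot):=n\,\pr(\bX_0/n\in\cdot)$ for $\bX_0\sim\tilde G$: simple max-stability gives $n\,\pr(\bX_0\nleq n\by)=n\bigl(1-\tilde G(n\by)\bigr)\to V(\by)$, using $1-\tilde G(n\by)\sim-\log\tilde G(n\by)=V(\by)/n$. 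This is the Pareto-scale instance of the convergence \eqref{ch7:eq:mrvexp}, and it pins down $\nu$ through $\nu(\{\bx:\bx\nleq\by\})=V(\by)$, with the homogeneity \eqref{ch7:eq:nuhuomo} and the marginal normalization \eqref{ch7:eq:nu1} inherited from the corresponding properties of $V$. The main obstacle is exactly this last step: proving that the limiting set function is a genuine (Radon) measure on $[0,\infty)^D\setminus\{\bzero\}$---equivalently, that the inclusion--exclusion set function generated by the homogeneous $V$, given that $\tilde G=\e^{-V}$ is a bona fide distribution function, is nonnegative and countably additive. This is the measure-theoretic content that the chapter otherwise keeps offstage (compare the remarks around Definition~\ref{ch7:def:Lam} and the deferral to Section~\ref{ch7:sec:math}); everything else is bookkeeping with homogeneity and marginal transformations.
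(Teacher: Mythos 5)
The paper itself gives no proof of Proposition~\ref{ch7:prop:maxstable}: it is stated as a classical fact and deferred to the cited monographs, so the only comparison available is with the standard classical argument, which is the route you follow. Your direction MEV~$\Rightarrow$~max-stable is correct and complete (max-stability of each GEV margin plus homogeneity~\eqref{ch7:eq:Vellhomo} of $\ell$), and your reduction of the converse to \emph{simple} max-stability of the unit-Fr\'echet transform $\tilde G$, giving $V=-\log\tilde G$ homogeneous of order $-1$ with $V(\infty,\ldots,y_j,\ldots,\infty)=1/y_j$, is also correct, including the identity $T_j(\alpha_{k,j}x+\beta_{k,j})=k\,T_j(x)$ that drives it.

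The genuine gap is the step you yourself label the crux: the existence of a measure $\nu$ on $[0,\infty)^D\setminus\{\bzero\}$ with $\nu(\{\bx:\bx\nleq\by\})=V(\by)$ is asserted, not proved, and without it the converse is not finished, since Definition~\ref{ch7:def:mev} demands an exponent measure behind $\ell$, not merely a homogeneous function $V$ with the right marginal values. The argument you sketch does close it, but it has to be carried out. One way: for a rectangle $(\ba,\bb]$ bounded away from $\bzero$, inclusion--exclusion over its corners $\bc$ gives $n\,\pr(\bX_0\in(n\ba,n\bb])=n\sum_{\bc}\epsilon(\bc)\,\e^{-V(\bc)/n}\to-\sum_{\bc}\epsilon(\bc)\,V(\bc)$, because the alternating signs $\epsilon(\bc)$ sum to zero; the limit is nonnegative as a limit of nonnegative quantities, the resulting rectangle function is finitely additive for the same reason, and continuity plus homogeneity of $V$ let you extend it to a Radon measure by Carath\'eodory, after which \eqref{ch7:eq:nu1} and \eqref{ch7:eq:nuhuomo} follow from the marginal values and homogeneity of $V$. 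Alternatively, establish relative vague compactness of $n\,\pr(\bX_0/n\in\cdot)$ on the compactified punctured orthant (masses of sets bounded away from $\bzero$ stay bounded since $n\,\pr(\bX_0\nleq n\varepsilon\bone)\to V(\varepsilon\bone)<\infty$) and identify every subsequential limit on the $\pi$-system $\{\bx:\bx\nleq\by\}$, where your computation pins it to $V(\by)$. Either version is short but genuinely necessary; as written, your proof stops exactly one step before the finish line, while everything leading up to it is sound.
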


\paragraph{Large-sample distributions}

To see where Definition~\ref{ch7:def:mev} comes from and how the exponent measure enters the picture, consider an independent random sample $\bE_1,\ldots,\bE_n$ from the common distribution of a random vector $\bE = (E_1,\ldots,E_D)\T$ with unit-exponential margins. Each point $\bE_i$ is a vector $(E_{i,1},\ldots,E_{i,D})\T$ of $D$ possibly dependent unit-exponent variables. The sample maximum of the $n$ observations of the $j$th variable is now $M_{n,j}^E = \max(E_{1,j},\ldots,E_{n,j})$ and the vector of sample maxima is $\bM_n^E = \rbr{M_{n,1}^E,\ldots,M_{n,D}^E}$.
Assume that the distribution of $\bE$ satisfies Eq.~\eqref{ch7:eq:mrvexp} for some exponent measure $\Lambda$. Recall the counting variable $N_n$ in Eq.~\eqref{ch7:eq:Nn}. As the sample size $n$ grows, the sample maxima diverge to $+\infty$, and in view of Eq.~\eqref{ch7:eq:ENnLam}, the growth rate is $\log n$. The following three statements say exactly the same thing, but using different concepts, namely block maxima, threshold excesses, and point processes: for $\bu \in \R^D$,
\begin{itemize}
\item the vector of sample maxima $\bM_n^E$ is dominated by $\bu$, that is, $\bM_n^E \le \bu$;
\item no point $\bE_i$ exceeds the threshold $\bu$, that is, $\bE_i \le \bu$ for all $i = 1,\ldots,n$;
\item the number of sample points $\bE_1,\ldots,\bE_n$ in $\{ \bz : \bz \nleq \bu \}$ is zero.
\end{itemize}
Now fix $\bx \in \R^D$ and consider the above statements in $\bu = \bx + \log n$. The region $\{ \bz : \bz \nleq \bx + \log n \}$ is of the form $B + \log n$ with $B = \{ \bz : \bz \nleq \bx \}$. By Eq.~\eqref{ch7:eq:toPoisson}, $N_n(B) = \sum_{i=1}^n \indic \rbr{ \bE_i \in B + \log n }$ converges in distribution to a Poisson random variable $N(B)$ with expectation $\expec\cbr{N(B)} = \Lambda(B)$, so that
\begin{align*}
	\pr ( \bM_n^E - \log n \le \bx )
    &= \pr\{ N_n(B) = 0 \} \\
    &\to \pr \{ N(B) = 0 \} = \exp \{ - \Lambda(B) \},
    \qquad n \to \infty.
\end{align*} 
%\pn{Notation wise, I wonder if writing 
%\[
%	\lim_{n \to \infty} \pr( \max( {\bf E}_1,\dots, {\bf E}_n)  - \log n \le \bx ) = \pr\{ N(B) = 0 \} = \exp \{ - \Lambda(B) \}.
%\] 
%with ${\bf E}_{n,j}= \max(E_{1,1},\ldots,E_{n,j})$ will avoid recalling when $\bX$ is unit-exponential. To me, $\bX$ should denote  the general case and ${\bf E}$ the  unit-exponential one in the full chapter.}
%\js{OK, done.}
For the given set $B$, we have
\[
	\Lambda(B) = \Lambda(\{ \bz : \bz \nleq \bx \}) = V(\e^{\bx}) = \ell(\e^{-\bx}). 
\]
We obtain
\[
	\lim_{n \to \infty} \pr( \bM_n^E - \log n \le \bx )
	= \exp \{ - \ell(\e^{-\bx}) \} = G(\bx)
\]
with $G$ an MEV distribution as in Definition~\ref{ch7:def:mev} with standard Gumbel margins, $G_j(x_j) = \exp(-\e^{-x_j})$ for $j = 1,\ldots,D$. The same reasoning but for general univariate margins produces $G$ of the form in Eq.~\eqref{ch7:eq:stdf2G}. Recall that a univariate distribution is called non-degenerate if it is not concentrated at a single point but allows for some genuine randomness.

\begin{theorem}[Large-sample distributions of multivariate maxima]
\label{ch7:thm:mda}
Let $\bX_1,\ldots,\bX_n$ be an independent random sample from a common $D$-variate distribution $F$. Assume there exist scaling vectors $\boldsymbol{a}_n \in (0, \infty)^D$ and location vectors $\boldsymbol{b}_n \in \R^D$ together with a multivariate distribution $G$ with non-degenerate margins such that the vector $\bM_n$ in Eq.~\eqref{ch7:eq:Mn} satisfies
\[
    \pr \rbr{\frac{\bM_n - \boldsymbol{b}_n}{\boldsymbol{a}_n} \le \bx}
    = F^n(\boldsymbol{a}_n \bx + \boldsymbol{b}_n) = G(\bx), \qquad n \to \infty,
\]
for all $\bx \in \R^D$ such that $G$ is continuous in $\bx$. Then $G$ is an MEV distribution as in Definition~\ref{ch7:def:mev}.
\end{theorem}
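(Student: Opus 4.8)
\section*{Proof proposal}

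The plan is to peel off the margins first, then standardize them, and finally recognize the joint structure as coming from an exponent measure via the regular-variation picture of Section~\ref{ch7:sec:ppp}. (An even shorter route is available: show that $G$ is max-stable by a convergence-of-types argument and invoke Proposition~\ref{ch7:prop:maxstable}; but since that merely relocates the work into Proposition~\ref{ch7:prop:maxstable}, I describe the self-contained argument.) \emph{Step 1 (margins).} Fix $j \in \{1,\ldots,D\}$ and let every coordinate of $\bx$ other than the $j$th tend to $+\infty$ in the displayed convergence; since $G$ is a distribution function this is a continuity direction, and we obtain $F_j^n(a_{n,j} x_j + b_{n,j}) \to G_j(x_j)$ at every continuity point of $G_j$. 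As $G_j$ is non-degenerate by assumption, the univariate extremal-types theorem (from the earlier chapter) forces $G_j = \gev(\mu_j,\sigma_j,\xi_j)$ for some $\mu_j \in \R$, $\sigma_j > 0$, $\xi_j \in \R$. This already disposes of the marginal requirement in Definition~\ref{ch7:def:mev}.

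\emph{Step 2 (standardizing the margins).} Next I would show that one may assume without loss of generality that each $G_j$ is the standard Fréchet law $G_j(y) = \e^{-1/y}$ on $(0,\infty)$ and that the normalization is $\ba_n = n\bone$, $\bb_n = \bzero$. For each $j$ let $\psi_{n,j}$ be the increasing map carrying the level $a_{n,j} x + b_{n,j}$ to $n\, (-\log G_j(x))^{-1}$, i.e.\ to $n$ times the standard-Fréchet quantile of $G_j(x)$; applying $\psi_{n,j}$ coordinatewise to the sample does not alter whether componentwise maxima lie below a threshold, so the vector of maxima of the transformed sample, divided by $n$, converges to the law $G$ with its margins replaced by standard Fréchet ones. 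The technical content here is a multivariate convergence-of-types argument: one checks that these coordinatewise monotone rescalings converge locally uniformly to the appropriate limiting maps and hence carry the joint weak limit along. This is classical for maxima but must be stated with care.

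\emph{Step 3 (regular variation and the exponent measure).} With standard Fréchet margins the hypothesis reads $F^n(n\by) \to G(\by)$ on a dense set of $\by > \bzero$. Since then $F(n\by) = \{F^n(n\by)\}^{1/n} \to 1$, we have $n \log F(n\by) = -n\{1 - F(n\by)\}\{1+o(1)\}$, so that
\[
    n\, \pr(\bX \not\le n\by) = n\{1 - F(n\by)\} \longrightarrow -\log G(\by) =: V(\by),
    \qquad \by > \bzero,
\]
at continuity points. Because the L-shaped sets $\{\bx : \bx \not\le \by\}$ form a convergence-determining class and $V$ is monotone in $\by$, an inclusion--exclusion/Portmanteau argument upgrades this to vague convergence $n\,\pr(n^{-1}\bX \in \,\cdot\,) \to \nu(\,\cdot\,)$ on $[0,\infty]^D \setminus \{\bzero\}$ for a Radon measure $\nu$ with $\nu(\{\bx : \bx \not\le \by\}) = V(\by)$; equivalently, the marginally transformed vector $\bE$ satisfies Eq.~\eqref{ch7:eq:mrvexp} with $\Lambda$ the pushforward of $\nu$ under $\log$. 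It remains to check the two defining properties of an exponent measure (Definition~\ref{ch7:def:Lam}, in the unit-Pareto form~\eqref{ch7:eq:nu1}--\eqref{ch7:eq:nuhuomo}). Homogeneity: the limit relation makes $G$ max-stable (along $nk$ one gets both $G^k$ and an affine rescaling of $G$, which by convergence of types coincide), and with standard Fréchet margins this yields $G^k(k\by) = G(\by)$, hence $k\,V(k\by) = V(\by)$ for integers $k$; extending to rationals and then to all $t>0$ by monotonicity gives $V(t\by) = t^{-1} V(\by)$, i.e.\ $\nu(tB) = t^{-1}\nu(B)$. Marginal normalization: $G_j(y) = \e^{-1/y}$ gives $V(\infty,\ldots,1,\ldots,\infty) = 1$, i.e.\ $\nu(\{x_j \ge 1\}) = 1$. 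Thus $\nu$ is an exponent measure, $\ell(\by) := V(1/\by)$ is its stable tail dependence function in the sense of Eq.~\eqref{ch7:eq:stdf}, and $G(\by) = \exp\{-V(\by)\} = \exp\{-\ell(\e^{-\by})\}$ as in Eq.~\eqref{ch7:eq:V2G}; undoing the marginal standardization of Step~2 turns this into Eq.~\eqref{ch7:eq:stdf2G} with the original $\gev$ margins, so $G$ is an MEV distribution.

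\emph{Main obstacle.} The two substantive points are (a) the multivariate convergence-of-types reduction of Step~2---verifying that the coordinatewise relocation and rescaling that standardize the margins genuinely transport the joint weak convergence---and (b) the measure-theoretic upgrade in Step~3, namely that the set function prescribed on the L-shaped sets $\{\bx \not\le \by\}$ extends to a unique Radon, homogeneous measure $\nu$ on $[0,\infty]^D \setminus \{\bzero\}$. Both are standard in the multivariate regular variation literature but are where the real work lies; everything else is bookkeeping with the formulas of Sections~\ref{ch7:sec:mgp}--\ref{ch7:sec:ppp}.
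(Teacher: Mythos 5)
Your outline is correct and is essentially the classical de Haan--Resnick argument, but it is a genuinely different route from what the chapter itself offers, because the chapter never proves Theorem~\ref{ch7:thm:mda} as stated: the paragraph preceding it argues in the \emph{opposite} direction, presupposing the regular-variation hypothesis~\eqref{ch7:eq:mrvexp} (existence of the exponent measure $\Lambda$) and then reading the event $\{\bM_n^E - \log n \le \bx\}$ as the zero-count event $\{N_n(B) = 0\}$ for $B = \{\bz : \bz \nleq \bx\}$, so that the Poisson limit~\eqref{ch7:eq:toPoisson} gives $\pr(\bM_n^E - \log n \le \bx) \to \exp\{-\Lambda(B)\}$; the converse assertion of the theorem (any non-degenerate limit of normalized maxima is MEV) is stated without proof, with the real work implicitly deferred to the literature. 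Your proposal supplies exactly that missing converse: margins via the univariate extremal-types theorem, marginal standardization to unit Fr\'echet, then $n\{1 - F(n\by)\} \to V(\by) = -\log G(\by)$ and the upgrade to a homogeneous Radon measure $\nu$ with the normalization~\eqref{ch7:eq:nu1}, which is precisely Definition~\ref{ch7:def:mev} after undoing the standardization. What each approach buys: the paper's point-process derivation is short and gives the conceptual bridge between maxima, excesses and Poisson counts, but only under the assumed limit~\eqref{ch7:eq:mrvexp}; yours is self-contained and proves the actual statement, at the price of the two technical steps you correctly flag (transporting the joint weak limit through the coordinatewise monotone rescalings, and extending the set function defined on the L-shaped sets to a unique Radon measure). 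Two small simplifications you could note: with margins standardized and the normalization $\ba_n = n\bone$, $\bb_n = \bzero$, the identity $G^k(k\by) = G(\by)$ follows directly by comparing the limits of $F^{nk}(nk\by)$ along $n$ and along $nk$, with no convergence-of-types argument needed at that point; and finiteness of $V(\by)$ for $\by > \bzero$ (hence $G(\by) > 0$, justifying the logarithm) follows from the union bound $n\{1-F(n\by)\} \le \sum_{j=1}^D n\{1-F_j(n y_j)\} \to \sum_{j=1}^D 1/y_j$.
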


The location-scale sequences $\boldsymbol{a}_n$ and $\boldsymbol{b}_n$ can be found from univariate theory. The new element in Theorem~\ref{ch7:thm:mda} is the \emph{joint} convergence of the $D$ normalized sample maxima. The latter does not follow automatically from the convergence of the univariate maxima separately but is an additional requirement on the relations between the $D$ variables, at least in the tail.

\section{Examples of Parametric Models}
\label{ch7:sec:par}
Parametric models for MGP and MEV distributions can be generated by the choice of parametric families for the generator vectors $\bT$ and $\bU$ in Equations~\eqref{ch7:eq:T2S} and~\eqref{ch7:eq:U2S}, respectively. The MGP density then follows by calculating the integrals in Equations~\eqref{ch7:eq:mgppdfT} and~\eqref{ch7:eq:mgppdfU}, while the density of the exponent measure $\Lambda$ follows from Eq.~\eqref{ch7:eq:pZ2lam}. Other ways to construct MGP densities is by exploiting the link in Eq.~\eqref{ch7:eq:lam2pZ} to exponent measure densities and to construct the latter via graphical models \cite{engelke2020graphical} or X-vines \cite{kiriliouk2024x}.

% \begin{equation}
% \label{ch7:eq:mgppdfU}
%     p_{\bZ}(\bz) = \frac{1}{\expec[\e^{\max \bU}]} \int_{-\infty}^{\infty} p_{\bU}(\bz + t) \, \e^t \, \diff t.
% \end{equation}
% For certain distributions of $\bT$ and $\bU$, these integrals can be calculated explicitly, leading to manageable analytic forms for mgp densities; see Section~\ref{ch7:sec:par}. The right-hand sides in Equations~\eqref{ch7:eq:mgppdfT} and~\eqref{ch7:eq:mgppdfU} are similar but different, underlining the different roles of $\bT$ and $\bU$ in the diagram~\eqref{ch7:eq:TUSZY}.

\begin{example}[Logistic family]
\label{ch7:ex:logistic}
In \eqref{ch7:eq:mgppdfU}, the choice of $\bU$ such that each component satisfies $\E\cbr{\exp(U_j)} =  \Gamma(1-1/\alpha)$  for some   $\alpha>1$ leads to  
\[
p_{\bU}(\bz)= \frac{\alpha^{D-1} \Gamma(D-1/\alpha)}{\Gamma(1-1/\alpha)}
\frac{ \exp\cbr{ - \alpha (z_1+\dots +z_D) }}
{\cbr{ \exp\left( - \alpha z_1 \right) + \dots + \exp\left( - \alpha z_D \right)}^{D-1/\alpha}},
\]
with $\bz$ such that $\max(\bz)>0$.
The corresponding  MGP   distribution, $p_{\bZ}(\bz)$ obtained by \eqref{ch7:eq:mgppdfU}, is associated to the well-known logistic max-stable distribution defined with  stable tail dependence function in \eqref{ch7:eq:stdf}
\[
    \ell(\bz) 
    = \rbr{z_1^{1/\alpha} + \dots + z_D^{1/\alpha}}^{\alpha},
    \qquad \bz \geq \bzero.
\]
\end{example}

\begin{example}[Hüsler--Reiss family]
\label{ch7:ex:HR}
A natural choice for $\bU$ in \eqref{ch7:eq:XEU} is a multivariate Gaussian random vector with mean $\bmu$ and 
positive-definite covariance matrix $\bSigma$, i.e.\ $\bU \sim \Norm(\bmu, \bSigma)$.
%\js{$\bU$ Gaussian, NOT the same thing as $\bT$ Gaussian!! See e.g. .}
This gives (see \cite{Kiriliouk:Rootzen:Segers:2019} for details)
$$
p_{\bU}(\bz)= c 
\exp\left[ - \frac{1}{2} 
\left\{ (\bz - \bmu)\T \bA (\bz - \bmu)
+ \frac{2 (\bz - \bmu)\T \bSigma^{-1}\bone - 1}{\bone^T\bSigma^{-1}\bone}
\right\}\right],
$$
with 
$$
c= \frac{(2\pi)^{(1-D)/2}|\bSigma|^{-1/2}}{\expec\cbr{\exp(\max \bU)} (\bone\T \bSigma^{-1} \bone)^{1/2}} 
\mbox{ and }
\bA= \bSigma^{-1} - \frac{\bSigma^{-1}\bone \bone\T \bSigma^{-1}}{\bone\T\bSigma^{-1}\bone},
$$
and for $\bz$ such that $\max(\bz)>0$.
The corresponding  MGP   distribution, $p_{\bZ}(\bz)$ obtained by \eqref{ch7:eq:mgppdfU}, is  associated to the so-called Brown–Resnick or Hüsler--Reiss max-stable model.
The matrix $\bA$ is equal to the Hüsler--Reiss precision matrix studied extensively in \cite{Hentschel09092024}.
This parametric family has been used in various applications, see the chapters on graphical models and max-stable and Pareto processes. 
\end{example}

\begin{example}[$\bT$-Gaussian family]
The previous MGP Hüsler--Reiss distribution should not be confused with the MGP distribution that can be obtained by plugging a multivariate Gaussian random vector as $\bT$ in \eqref{ch7:eq:Z additive model}.  
\chg{In \cite{Kiriliouk:Rootzen:Segers:2019}, the resulting MGP density is derived by calculating the integral in \eqref{ch7:eq:mgppdfT}, yielding}
\[
p_{\bT}(\bz)= 
\frac{(2\pi)^{(1-D)/2}|\bSigma|^{-1/2}}{(\bone^T\bSigma^{-1}\bone)^{1/2}}
\exp\left\{ -\frac{1}{2}(\bz -\bmu)\T \bA (\bz-\bmu) - \max \bz\right\},
\]
for $\bz$ such that $\max(\bz)>0$ and with $\bA$ as above.
% $\bY \sim \Norm(\bmu, \bSigma)$. 

%Different normalization are possible and some   can be expressed as a sum of multivariate normal distribution functions \cite{DavisonHuser15}.
\end{example}

Even though the margins of an MEV distribution $G$ belong to the $\gev$ family and thus are continuous, $G$ need not have a $D$-variate density. The most well-known case is the family of max-linear distributions, see e.g. \cite{Kluppelberg22}.

% \begin{example}[Max-linear family]
% \label{ch7:ex:ml}
% \end{example}

\section{Notes and Comments}
\label{ch7:sec:lit}
The goal of this chapter was to provide the mathematical distributional building  blocks to understand 
\chg{multivariate extremes of a $D$-dimensional random sample}. 
In particular, the rich, but complex, connections between the different representations of the dependence structure were highlighted.  
Compared to other book introductions  that open  with  the block-maxima approach to explain concepts in multivariate extreme value theory, our first section  focused on   the MGP distribution. Recent literature 
like \cite{Kiriliouk:Rootzen:Segers:2019,Rootzen:Segers:Wadsworth:2018b} indicates that the MGP family offers new avenues for practitioners  in terms of model building and inference, while its definition remains simple, see \eqref{ch7:def:mgp}. 
Still, the reader needs to keep in mind that all approaches treated in this chapter are inter-connected.
It is often the type of data at hand that  allows the practitioner to select the most appropriate representation in terms of model choice and estimation schemes. 
%, see Figure \ref{ch7:fig:all}. 
Inference techniques, simulations algorithms, inclusions  of covariates and various other topics needed to model real life applications will be detailed in the coming chapters, as well as   specific case studies  with dedicated  R code examples. 

Concerning further reading on multivariate extreme value theory,    
a large number of authors have contributed to its development  during the last 30 years, so that a detailed bibliography could be longer than this chapter itself. 
To keep the length of our  list of references at bay, we have arbitrarily decided to  highlight in our short bibliography: general MGP  articles like \cite{Rootzen:Tajvidi:2006,Kiriliouk:Rootzen:Segers:2019}, some  mile-stone stone articles concerning the theoretical  developments  in multivariate extreme value theory, such as \cite{dehaan:resnick:1977},  and methodological or survey papers like \cite{coles:tawn:1991,DavisonHuser15}. Concerning books, readers with mathematical inclinations  could consult 
\cite{dehaan:ferreira:2006,Falk:2019,Falk11,Resnick:2007}.
\chg{An early source is the monograph \cite{resnick:1987}, developing the exponent measure for general max-infinitely divisible distributions in Section~5.3; the measures $\Lambda$ and $\nu$ introduced in our Section~\ref{ch7:sec:ppp} are a special case.}
Some case studies and examples can be found in the book \cite{beirlant:goegebeur:teugels:segers:2004} and of course in the later chapters in this handbook.  
%We hope that our short and arbitrary  list of references  will help the reader to get a deeper understanding on some topics. 
For more recent  references on applications, we simply refer to  the bibliographies  within each chapter of this book. They  offer  another  opportunity to deepen the applied side of the  topics introduced here.

%\js{Not sure we want to keep this section. There's just too much literature to cover, this is not the chapter's aim. Besides, there are the next chapters in the book, each focusing on specific topics and giving many more references.}
% \pn{Due to the time left, I  agree with you. 
% Still, I wonder if, somewhere, we should add a paragraph of the following type}:
% A fast literature on multivariate extreme value theory has been growing steadily during these last decades. In terms of books, most of them have been geared towards the theory like the recent book of 
% \cite{KulikSoulier}. 
% \js{Cite some relevant books and key (survey) articles.}
% In ter

%\appendix

\section{Mathematical Complements}
\label{ch7:sec:math}
%\js{At the moment, this section only contains a single proof. Are there other things we want to put here? Some more advanced mathematical material such as the explanations on the angular measure currently at the end of Section~\ref{ch7:sec:ppp}?}

\begin{proof}[Proof of Proposition~\ref{ch7:prop:mgpLam}]
First, suppose $\Lambda$ is an exponent measure as in Definition~\ref{ch7:def:Lam}. We need to show that the random vector $\bZ$ whose distribution is defined in Eq.~\eqref{ch7:eq:Lam2mgp} is a standard mgp random vector as in Definition~\ref{ch7:def:mgp} and that Eq.~\eqref{ch7:eq:ESjeq} holds. The latter follows simply from $\pr(Z_j > 0) = \Lambda(\{\bx : x_j > 0\}) / \Lambda(\LL) = 1/\Lambda(\LL)$, since $\Lambda$ is an exponent measure. To show that $\bZ$ is an mgp random vector, define $E = \max \bZ$. For $t \ge 0$, we have
\[
    \pr(E > t) = \frac{\Lambda(\{\bx : \max \bx > t\})}{\Lambda(\LL)} = \frac{\Lambda(t + \LL)}{\Lambda(\LL)} = \e^{-t},
\]
so that $E$ is a unit-exponential random variable. Further, putting $\bS = \bZ - E$, we have, by homogeneity, for $t \ge 0$ and $A \subset [-\infty, -\infty)^D$,
\begin{align*}
    \pr(E > t, \bS \in A) 
    &= \pr(\max \bZ > t, \bZ - \max \bZ \in A) \\
    &= \frac{\Lambda(\{ \bx : \max \bx > t, \, \bx - \max \bx \in A \})}{\Lambda(\LL)} \\
    &= \frac{\e^{-t} \Lambda(\{ \bx : \max \bx > 0, \, \bx - \max \bx \in A \})}{\Lambda(\LL)} \\
    &= \pr(E > t) \, \pr(\bS \in A),
\end{align*}
yielding the independence of $E$ and $\bS$. The choice $t = 0$ and $A = \{ \bx : x_j > -\infty \}$ yields $\pr(S_j > -\infty) = \Lambda(\{\bx : \max \bx > 0, x_j > -\infty \}) / \Lambda(\LL)$, which is positive, since the numerator is bounded from below by $\Lambda(\{ \bx : x_j > 0 \}) = 1$.

Second, suppose $\bZ \sim \mgp(\bone, \bzero, \bS)$ satisfies Eq.~\eqref{ch7:eq:ESjeq} and define a measure $\Lambda$ by Eq.~\eqref{ch7:eq:mgp2Lam}. Then we need to show that $\Lambda$ is an exponent measure and that Eq.~\eqref{ch7:eq:Lam2mgp} holds. For $j = 1,\ldots,D$, we have
\begin{align*}
    \Lambda(\{\bx : x_j > 0\}) 
    &= \frac{1}{\pr(Z_j > 0)} \int_{-\infty}^\infty \pr(t + S_j > 0) \, \e^{-t} \, \diff t \\
    &= \frac{1}{\pr(Z_j > 0)} \int_{0}^\infty \pr(t + S_j > 0) \, \e^{-t} \, \diff t \\
    &= \frac{1}{\pr(Z_j > 0)} \pr(E + S_j > 0) = 1,
\end{align*}
where $E$ is a unit-exponential random variable independent of $S_j$. On the second line, we used the fact that $S_j \le 0$ and thus $t + S_j \le 0$ for $t \le 0$. Further, for real $u$, the identity $\Lambda(u+B) = \e^{-u} \Lambda(B)$ follows from Eq.~\eqref{ch7:eq:mgp2Lam} by the change of variable from $t$ to $t-u$. Eq.~\eqref{ch7:eq:mgp2Lam} with $B = \LL$ yields
\begin{align*}
    \Lambda(\LL) 
    &= \frac{1}{\pr(Z_j > 0)} \int_{-\infty}^{\infty} \pr(t + \bS \in \LL) \, \e^{-t} \, \diff t \\
    &= \frac{1}{\pr(Z_j > 0)} \int_{0}^{\infty} \e^{-t} \, \diff t \\
    &= \frac{1}{\pr(Z_j > 0)},
\end{align*}
as $\max \bS = 0$ almost surely implies that $\pr(t + \bS \in \LL) = 1$ if $t > 0$ and $\pr(t + \bS \in \LL) = 0$ if $t \le 0$.
Finally, for $B \subseteq \LL$, we have
\begin{align*}
    \Lambda(B) 
    &= \frac{1}{\pr(Z_j > 0)} \int_{-\infty}^\infty \pr(t + \bS \in B) \, \e^{-t} \, \diff t \\
    &= \Lambda(\LL) \int_{0}^\infty \pr(t + \bS \in B) \, \e^{-t} \, \diff t \\
    &= \Lambda(\LL) \pr(E + \bS \in B) = \Lambda(\LL) \pr(\bZ \in B),
\end{align*}
which is Eq.~\eqref{ch7:eq:Lam2mgp}; on the second line, we used the fact that $\pr(t + \bS \in B) = 0$ for $t \le 0$, since $\bS \le \bzero$ and $B \subseteq \LL$.
\end{proof}

\paragraph{Acknowledgments}

The authors gratefully acknowledge helpful comments by anonymous reviewers on an earlier version of this text.

\printbibliography

\end{document}